\newtheorem{theorem}{Theorem}[section]
\newtheorem{definition}{Definition}[section]
\newtheorem{lemma}{Lemma}[section]
\begin{document}
\nolinenumbers

\begin{frontmatter}

\title{Numerical convergence and stability analysis for a nonlinear mathematical model of prostate cancer}
\author[mymainaddress]{Farzaneh Nasresfahani\fnref{myfootnote}}
%

\author[mymainaddress]{M.R. Eslahchi\corref{mycorrespondingauthor}}
\cortext[mycorrespondingauthor]{Corresponding author}
\ead{eslahchi@modares.ac.ir}

\address[mymainaddress]{Department of Applied Mathematics, Faculty of Mathematical Sciences, Tarbiat Modares University, P.O. Box 14115-134}

\begin{abstract}
\noindent  The main target of this paper is to present an efficient method to solve a nonlinear free boundary mathematical model of prostate tumor. This model consists of two parabolics, one elliptic and one ordinary differential equations that are coupled together and describe the growth of a prostate tumor. We start our discussion by using the front fixing method to fix the free domain. Then, after employing a nonclassical finite difference and the collocation methods on this model, their stability and convergence  are proved analytically. Finally, some numerical results are considered to show the efficiency of the mentioned methods.
\end{abstract}
\begin{keyword}
 Spectral method, Finite difference method, Nonlinear parabolic equation, Free boundary problem,
Prostate cancer model, Convergence and Stability.
\MSC[2010] {65M70, 65M12, 65M06, 35R35.}
\end{keyword}
\end{frontmatter}
\section{Introduction}
Cancer is the second leading cause of death in the world and there are many types of tumors that can be diagnosed in the human body. There are loads of of cancers which in one sort of category can be divided into three kinds, cancers related to women, to men, and those which consider no gender. Prostate cancer can be regarded as one of the prevalent types of the second kind which is the second most common cancer in men after lung cancer. Approximately one in six men will be diagnosed with prostate cancer during his lifetime, and about
one in 36 will die of prostate cancer \cite{yout}. Most of the information about this type of cancer in the United States of America (USA) originates from the US National Cancer Institute's Surveillance, Epidemiology, and End Results (SEER) program \cite{altekruse2009seer,brawley2012prostate}.\\
Prostate cancer like many other cancers is caused by an abnormal and uncontrolled growth of cells which can either be malignant or benign. Tumor cells of prostate cancer are hormone-sensitive and they crucially depend on male hormone for growth and survival, which is nominated by androgen \cite{marcelli2000androgen}.  The receptor of androgen binds to testosterone and regulates transcription of androgen-responsive genes and many of them stimulate cell proliferation. Cell proliferation is how quickly a cancer cell copies its DNA and divides into 2 cells. So, increasing the level of androgen implies increasing the risk of prostate cancer \cite{debes2004mechanisms}. In this case, the treatment approaches for cancer are to reduce or eliminate testosterone binding to androgen receptors. One of them is a therapy that stops androgen production which is called androgen deprivation therapy (ADT) \cite{huggins2002studies}. Total androgen blockage (TAB) which further combines anti-androgens with ADT is another one which is also used. Due to this issue, scientists focus on hormone therapy of prostate cancer or androgen deprivation therapy. However, after a while, it was concluded experimentally that both ADT and TAB are not so successful in removing all tumor cells and relapse occurs often. This relapse arises because prostate tumors tend to progress to an androgen-independent stage under the selective pressure of androgen ablation therapy and after the positive response to the treatment (especially androgen deprivation therapy). The so-called androgen-independent (AI) cells are considered to be responsible for this relapse. These cells not only are unresponsive to androgen suppression but also convenient to proliferate even in an androgen-poor environment \cite{bruchovsky2000intermittent}. In Figure \ref{fig1} the mutation of AD cells into AI ones and  the process of prostate tumor under the AI relapse and continuous androgen suppression therapy is illustrated \cite{ideta2008mathematical}. It is observable that suppression of prostate cancer is applicable to some extent. 
\begin{figure}
\centering
\includegraphics[scale=.7]{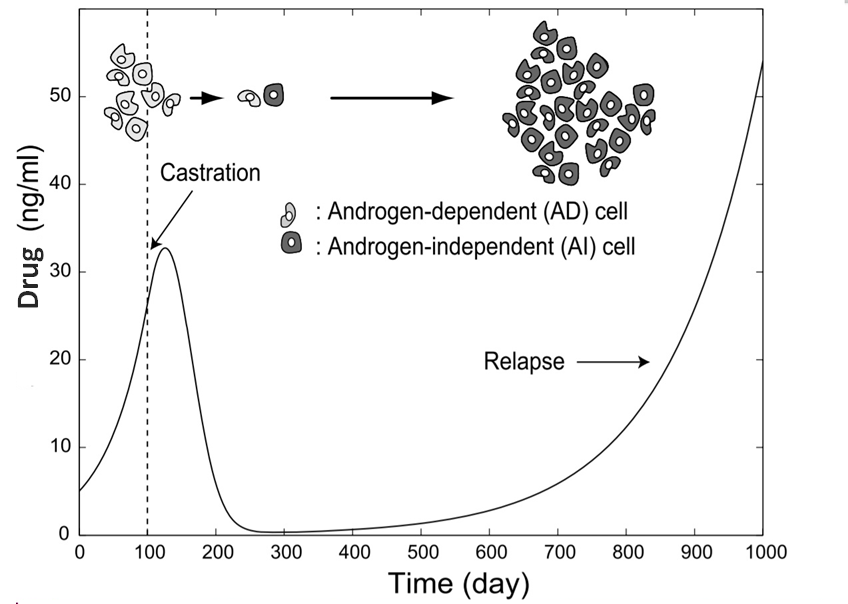}
\caption{\it{Schematic illustration of process of prostate tumor under  androgen suppression therapy \cite{ideta2008mathematical}.}}
\label{fig1}
\end{figure}
Therefore, due to the importance of better recognition of these kinds of prostate cancers (in which relapse occurs) in treatment, their simulation was considered. So, lots of mathematicians were interested in modelling and solving these problems. For instance, the authors of \cite{peng2016prediction} applied a system of nonlinear coupled ordinary differential equations to analyse a mathematical model of the treatment of prostate cancer. In this model, the effects of androgen-deprivation therapy on the prostate tumor cells is described. In \cite{tao2010mathematical} an interesting mathematical model of prostate cancer which investigates the possible mechanism of an AI tumor relapse is developed as well. Also, the author of \cite{jackson2000mathematical,jackson2004math} presented a mathematical model of prostate cancer which is closely related to experimental studies. In terms of solving numerically, there are loads of authors who has worked on numerical solution of mathematical models such as \cite{ramezani2008combined,zhao2017convergence} in which a multi-term time
fractional diffusion equation is solved numerically using a fully-discrete method and convergence and a
hyperbolic equation with an integral condition is solved using finite difference/spectral method respectively. Also, in \cite{esmaili2017application} a free boundary problem which models tumor growth with drug application is numerically investigated using the collocation method and fixed point theory. The authors of \cite{esmailinasr} have presented a fractional diffusion mathematical model of tumor and it has been solved numerically using nonclassical finite difference and collocation method.
An important factor in spectral methods to have stable results is to choose an appropriate set of trial functions. In many cases, this proportion is satisfied using the famous family of classical Jacoby orthogonal polynomials. For instance, in \cite{shamsi2012determination}, the authors apply the Legendre pseudospectral method to determine a control function in parabolic equations. In terms of finite difference method, there are quite a few authors working on different versions of this method. For instance in \cite{dehghan2006finite}, several finite difference schemes are discussed. Also, in \cite{dehghan2017comparison}, the authors investigate a model of tumor using a meshless method.
\subsection{Mathematical model}
One of the models with the mentioned properties is presented in \cite{tao2010mathematical} which is considered in this article and it is as follows
\begin{eqnarray}\nonumber
&\dfrac{\partial P}{\partial t}(r,t)+\dfrac{1}{r^2}\dfrac{\partial}{\partial r}{{(r^2 u(r,t)P(r,t))}}=&\\ \nonumber
&\dfrac{D_p}{r^2}\dfrac{\partial}{\partial r}\left( r^2\dfrac{\partial P}{\partial r}(r,t)\right)+\alpha_p(a(t))P(r,t)-\delta_p(a(t))P(r,t)-(1-I)\beta(a(t))P(r,t),&\\
&P(r,0)=P_0(r),~~\dfrac{\partial P}{\partial r}(0,t)=\dfrac{\partial P}{\partial r}(R(t),t)=0,&\label{initialp}
\end{eqnarray}
\begin{eqnarray}\nonumber
&\dfrac{\partial q}{\partial t}(r,t)+\dfrac{1}{r^2}\dfrac{\partial}{\partial r}{{(r^2 u(r,t)q(r,t))}}=&\\ \nonumber
&\dfrac{D_q}{r^2}\dfrac{\partial}{\partial r}\left( r^2\dfrac{\partial q}{\partial r}(r,t)\right)+\alpha_q(a(t))q(r,t)-\delta_q(a(t))q(r,t)+(1-I)\beta(a(t))p(r,t),&\\
&q(r,0)=q_0(r),~~\dfrac{\partial q}{\partial r}(0,t)=\dfrac{\partial q}{\partial r}(R(t),t)=0,&\label{initialq}
\end{eqnarray}
\begin{eqnarray}\nonumber
&\dfrac{1}{r^2} \dfrac{\partial}{\partial r}\left( r^2 u(r,t)P(r,t)\right)=&\\\nonumber
&\alpha_p (a(t))P(r,t)+1-P(r,t)-\delta_p(a(t))P(r,t)-\delta_q(a(t))(1-P(r,t)),&\\
&u(0,t)=0,&\label{initialu}
\end{eqnarray}
\begin{eqnarray}\nonumber
&\dfrac{\mathrm{d}R}{\mathrm{d}t}=u(R(t),t),&\\
&R(0)=1,\label{initialR}
\end{eqnarray}
where $a(t)$ is the androgen level, $D_p$, $D_q$ are random motility coefficients,  $\alpha_p(a(t))$ and $\alpha_q(a(t))$ are the proliferation rates and $\delta_p(a(t))$ and $\delta_q(a(t))$ are the apoptosis rates of the AD and AI cells respectively. Also,  $\beta (a(t))$ is the mutation rate which indicates the mutation of the AD cells into the AI ones and  $I$ is the intensity of the inhibitors which reduce the mutation rate and its range is from zero to one. i.e. $I=0$ is related to no inhibition of the mutation and $I=1$ is related to the perfect inhibition of the mutation. 
The tumor is given by 
$0<r<R(t) $, where $r$ is measured with the unit of cm, and $t$ is measured with the unit of the day. Also, the variables $P$, $q$ are taken to be functions of $(r,t)$ only in the region $\{(r,t); 0 < r < R(t), t > 0\}$ which are the volume fractions of AD and AI cells.
In this model, the total number of AD and AI cells are considered to be constant per unit volume within the tumor, i.e.,
\begin{equation}\label{constantsum}
P+q=k\equiv \text{constant},
\end{equation}
However, the total combined mass of $P$ and $q$ generally varies in time due to the proliferation and death of the AI and AD cells. This change in total mass gives rise to a velocity field $u$ which depend on the level of $a(t)$.

The most significant issue is to prove the existence and uniqueness of a mathematical model in order to ensure that  there exists only one solution satisfying the given initial and boundary conditions. The existence and uniqueness of this mathematical model are presented in \cite{tao2010mathematical} in detail.

The key factors to choose this mathematical model for solving and analysing are the features of the equations of the model as follows
\begin{itemize}
\item 
The growth of a prostate tumor considering hormone therapy and hypothetical mutation inhibitors is described. To determine more percisely,  This model not only considers the mutation of androgen-dependent (AD) tumor cells into androgen-independent (AI) ones but also introduces inhibition which is assumed to change the mutation rate which makes it more reliable model for prostate cancer.
\item
This model consists of  two coupled parabolic equation which stems from the diffusion treatment of two types of cells (AD and AI) which their proliferation and apoptosis rates are functions of androgen concentration.
\item
As tumor is assumed to be an incompressible fluid, $u$ is the velocity generated by cell proliferation and apoptosis. This feature is illustrated as an elliptic equation that is coupled with the other two parabolic equations.
\item
This model is a free bundary problem and it is rooted from assuming the prostate tumor as a densely packed and radially symmetry sphere of radius $R(t)$. The alteration of the size of the tumor is presented in the model by an ODE which shows the positive correlation between the velocity of radius of the tumor and the value of function $u$ in $R(t)$.
\end{itemize}
In this article, we intend to solve this free boundary nonlinear system of coupled PDEs that model prostate cancer which consists of two parabolics, one elliptic and one ordinary differential equations. For the readers' convenience, we highlight the main goals of this study as follows
\begin{itemize}
\item
As the free boundary problem should be transformed in order to change the model to an appropriate one to use the collocation method and to achieve more comfortable results for numerical analysis, it has been fixed using the front fixing method which is picked among the other means for solving free boundary problems (front fixing, front tracking and fixed domain methods). This choice lies in the fact that the domain of the model is a sphere and the more appropriate method with lower computationaly cost is to eliminate the free bounadry for this problem using the front fixing method (See (\ref{frontfix})).
\item
We have constructed a sequence by applying the finite difference method which converges to the exact solution of coupled partial differential equations (See Theorem \ref{convergencetheorem}).
\item
In each time step, using Taylor theorem,  the problem has changed to linear one and using the collocation method, equations (\ref{transformedp})-(\ref{transformedR}) are solved numerically.
\item
It has been proved that the constructed sequence converges to the exact solution of the problem and also the stability of the method has been proven (See Theorem \ref{stabilitytheorem} and \ref{convergencetheorem}).
\item
Numerical examples are presented to show the efficiency of the presented methods (See Examples 1 and 2).
\end{itemize}
\section{Approximating the solution of the problem}
Due to the diffusion nature of the tumor, the model is a free boundary. This feature can cause some difficulties in applying classical numerical methods and analyzing the convergence and stability which are grafted onto each other. The most notable of which is the need to construct trial functions for spectral methods. In this case, the trial functions depend on time. So, in each time step, the trial functions should be evaluated. These evaluations have computational cost. Meanwhile, due to the mentioned difficulties, it is hard to analyze the convergence and stability in each time step and follow the results. However, to overcome these difficulties one can apply suitable techniques to these mathematical models comprising the front fixing method \cite{wu1997front}. In this technique, the free boundary is fixed by a variable change. Due to the fact that the tumor grows radially symmetric with free boundary, using the front fixing method and a variable changes using a linear transformation $\psi:[R(t),1]\rightarrow=[-1,1]$ we transform the domain to a fixed one by the following variable changes 

\begin{align}\label{frontfix}
\rho:=\dfrac{2r}{R(t)}-1,\quad{v}(\rho ,t)&={u}(\psi^{-1}(\rho),t)={u}(r,t),\quad{P}(\rho ,t)={p}(\psi^{-1}(\rho),t)={p}(r,t).
\end{align}
the free boundary problem  is transformed into a problem with the fixed domain 
\[\{(\rho , t) \mid -1< \rho < 1 , t \geq 0\},\]

Please note that the interval $[-1,1]$ is chosen in order to use classical orthogonal polynomials.

Along with considering the Equations (\ref{initialp}), (\ref{initialq}) and (\ref{initialu}) with assumption (\ref{constantsum}) we have
\begin{eqnarray}\nonumber
&\dfrac{\partial p}{\partial t}(\rho,t)-\dfrac{(\rho+1)v(1,t)}{R(t)}\dfrac{\partial p}{\partial \rho}(\rho,t)-\dfrac{4D_p}{R(t)^2(\rho +1)^2}\dfrac{\partial}{\partial \rho}((\rho +1)^2\dfrac{\partial p(\rho,t)}{\partial \rho})=&\\\nonumber
&1-p(\rho,t)-\delta_p (a(t))(1-p(\rho,t))-(1-I)\beta(a(t))p(\rho ,t),&\\
&p(\rho,0)=p_0(\rho),~~\dfrac{\partial p}{\partial \rho}(-1,t)=\dfrac{\partial p}{\partial \rho}(1,t)=0,&\label{transformedp}
\end{eqnarray}
\begin{eqnarray}\nonumber
&\dfrac{\partial }{\partial \rho}((\rho+1)^2v(\rho ,t)p(\rho,t))=&\\\nonumber
&\dfrac{R(t)(\rho +1)^2}{2}\left( \alpha_p (a(t))p(\rho,t)+1-p(\rho,t)-\delta_p(a(t)) p(\rho,t)-\delta_q(a(t))(1-p(\rho,t))\right),&\\
&v(-1,t)=0,&\label{transformedv}
\end{eqnarray}
\begin{eqnarray}\nonumber
&\dfrac{\mathit{d}R(t)}{\mathit{d}t}=v(1,t),&\\
&R(0)=1.&\label{transformedR}
\end{eqnarray}
The functions $a(t)$, $\alpha_p(a(t))$, $\alpha_q(a(t))$, $\delta_p(a(t))$, $\delta_q(a(t))$ and $\beta(a(t))$ take the following specific forms \cite{jackson2000mathematical,jackson2004math}
\begin{eqnarray*}
&&a(t)=\exp(-bt)+a_s,~~ t\geq 0,\\
&&\alpha_p (a(t))=\theta_1+(1-\theta_1)\dfrac{a(t)}{a(t)+K},\\
&&\delta_p (a(t))=\delta_1\left[ w_1+(1-w_1)\dfrac{a(t)}{a(t)+K}\right],\\
&&\delta_q (a(t))=\delta_2\left[ w_2+(1-w_2)\dfrac{a(t)}{a(t)+K}\right],\\
&&\beta (a(t))=\beta_1\left(1-\dfrac{a(t)}{1+a_s}\right),
\end{eqnarray*}
where $a_s, \, \theta_1 ,\, b,\,K,\, \delta_1,\,\delta_2,\, w_1$ and $w_2$ are positive constants  and the following conditions are assumed
\[
0\leq a_s<1,~~ 0\leq \theta_1<1,~~ \delta_1<\delta_2,~~ w_2<1<w_1,
\]
and the parameter $a_s>0$ corresponds to ADT, and $a_s = 0$ corresponds to TAB.
(For furthure information plaese see  \cite{tao2010mathematical}).\\
Now, we want to approximate the solution of the problem (\ref{transformedp})-(\ref{transformedR}) for $-1 < \rho < 1$ and $0 < t < T$.
Let $t_i := ih\,  (i = 0,\cdots ,M)$ be mesh points, where $h:=\dfrac{T}{M}$ is the time step and $M$ is a positive
integer. Our goal is to solve the problem employing spectral method for the space approximation and the following non-classical discretization of second-order formula for
approximating the time derivative for a given function $z(\rho ,t)$
\begin{equation}\label{time_derivative}
\dfrac{\partial z}{\partial t}(\rho,t_{n+1})=\dfrac{z(\rho,t_{n+1})-z(\rho,t_{n})+\dfrac{z(\rho,t_{n-1})-z(\rho,t_{n})}{3}}{\dfrac{2h}{3}}+\, E_z^{n,1},
\end{equation}
and the following approximation for linearizing the equations
\begin{equation}\label{lenear}
z(\rho, t_{n+1})=2z(\rho,t_n)-z(\rho,t_{n-1})+\, E_z^{n,2},
\end{equation}
where $E_z^{n,1}$ and $E_z^{n,2}$ are the truncation errors and can be easily verified that there is a positive constant $c_1$ by which the following inequality holds
\begin{equation}
\max\{\Vert E_z^{n,1} \Vert_{\infty}, \Vert E_z^{n,2}\Vert_{\infty}\}<c_1 h^2.
\end{equation}
In the following, we have assumed that for a given functions $f(\rho,t)$ and $g(t)$ we have
\[
f_n(\rho)=f(\rho,t_n),
\]
\[
g_n=g(t_n).
\]
Implementing (\ref{time_derivative}) as an approximation of $\dfrac{\partial p(\rho,t_{n+1})}{\partial t}$ and (\ref{lenear}) into (\ref{transformedp}) leads to the following scheme
\begin{eqnarray}\nonumber
&p_{n+1}(\rho)-h^*\left( \dfrac{(\rho+1)(2v_n(1)-v_{n-1}(1)}{R_{n+1}}\right)\dfrac{\partial p_{n+1}(\rho)}{\partial \rho}-&\\\nonumber
&h^*\dfrac{4D_p}{R_{n+1}^2(\rho+1)^2}\dfrac{\partial}{\partial \rho}\left((\rho+1)^2\dfrac{\partial p_{n+1}(\rho)}{\partial\rho}\right)=&\\\nonumber
&h^* (2f(p_n(\rho))-f({p_{n-1}(\rho)}))+p_n(\rho)-\dfrac{p_{n-1}(\rho)-p_n(\rho)}{3}-h^*E_p^n,&\\ \nonumber
&p(0,\rho)=p_0(\rho),~~-1<\rho <1,&\\
&\dfrac{\partial p_{n+1}}{\partial \rho}(-1)=\dfrac{\partial p_{n+1}}{\partial\rho}(1)=0,&
\end{eqnarray}
\begin{eqnarray}\nonumber
&\dfrac{\partial }{\partial \rho}((\rho+1)^2v_n(\rho)p_n(\rho))=&\\ \nonumber
&\dfrac{R_n(\rho +1)^2}{2}\left( \alpha_p (a(t_n))p_n(\rho)+1-p_n(\rho)-\delta_p(a(t_n)) p_n(\rho)-\delta_q(a(t_n))(1-p_n^{ap}(\rho))\right)  ,&\\
&v_n(-1)=0,&
\end{eqnarray}
where $h^*:=\dfrac{2h}{3}$ and
\[f(p_n(\rho)):=1-p_n(\rho)-\delta_p (a(t_n))(1-p_n(\rho))-(1-I)\beta(a(t_n))p_n(\rho).\]
Here $E_p^n$ is obtained by merging the errors of $E_p^{n,1}$ and $E_p^{n,2}$ in which there is a positive constant $c_2$ such that \begin{equation}\label{trunc1}
\Vert E_p^n\Vert_{\infty}<c_2 h^2.
\end{equation}
Now, applying difference formula (\ref{time_derivative}) on (\ref{transformedR}), we have
\begin{equation}\label{khyksl}
R_{n+1}=R_{n}-\dfrac{R_{n-1}-R_n}{3}+h^*v_n(1)-h^* E_R^{n}.
\end{equation}
Due to the fact that there exists a positive constant $c_3$ such that $\Vert E_R^{n}\Vert_{\infty}<c_3h^2$ and considering (\ref{trunc1}) we have
\begin{equation}\label{trunc2}
\max\{\Vert E_p^n\Vert_{\infty},\Vert E_R^{n}\Vert_{\infty}\}<c_4 h^2,
\end{equation}
where $c_4$ is a positive constant.\\
Now we approximate the solution of the problem (\ref{transformedp})-(\ref{transformedR}) by $(p_{n+1}^{ap},R_{n+1}^{ap})$, which is the approximated solution of the following problem
\begin{eqnarray}\nonumber
&p_{n+1}(\rho)-h^*\left( \dfrac{(\rho+1)(2v_n^{ap}(1)-v_{n-1}^{ap}(1)}{R^{ap}_{n+1}}\right)\dfrac{\partial p_{n+1}(\rho)}{\partial \rho}-&\\\nonumber
&h^*\dfrac{4D_p}{(R^{ap}_{n+1})^2(\rho+1)^2}\dfrac{\partial}{\partial \rho}\left((\rho+1)^2\dfrac{\partial p_{n+1}(\rho)}{\partial\rho}\right)&\\\nonumber
&=h^* (2f(p_n^{ap}(\rho))-f({p_{n-1}^{ap}(\rho)}))+p_n^{ap}(\rho)-\dfrac{p_{n-1}^{ap}(\rho)-p_n^{ap}(\rho)}{3},&\\ \nonumber
&p(\rho,0)=p_0(\rho),~~-1<\rho <1,&\\\label{gelebelo}
&\dfrac{\partial p_{n+1}}{\partial \rho}(-1)=\dfrac{\partial p_{n+1}}{\partial \rho}(1)=0,&
\end{eqnarray}
\begin{eqnarray}\nonumber
&\dfrac{\partial }{\partial \rho}((\rho+1)^2v_n^{ap}(\rho)p_n^{ap}(\rho))=&\\\nonumber
&\dfrac{R_n^{ap}(\rho +1)^2}{2}\left( \alpha_p (a(t_n))p_n^{ap}(\rho)+1-p_n^{ap}(\rho)-\delta_p(a(t_n)) p_n^{ap}(\rho)-\delta_q(a(t_n))(1-p_n(\rho))\right),&\\
&v_n(0)=0,&
\end{eqnarray}
\begin{eqnarray}\nonumber
&R_{n+1}^{ap}=R_{n}^{ap}-\dfrac{R_{n-1}^{ap}-R_n^{ap}}{3}+h^*v_n^{ap}(1),&\\
&R_0=1,&\label{wiera}
\end{eqnarray}
where $p_{n+1}^{ap}$ is obtained as an approximated solution of $p_{n+1}$ by solving the equation (\ref{gelebelo})-(\ref{wiera}) employing the collocation method. To implement this method, it is necessary to introduce a set of trial functions. Also, the feature of the mathematical model dictate us to find the solution of the problem in $H^2([-1,1])$. So, the trial functions are as follows
\begin{equation}\label{basisbasis}
\text{span}\{b_0(\rho),b_1(\rho),\cdots ,b_k(\rho)\}=\{u\in \mathbb{P}_{k+2}(\mathbb{R})\Big|~ \dfrac{\partial u}{\partial \rho}\mid_{\rho=-1}=\dfrac{\partial u}{\partial \rho}\mid_{\rho =1}=0\},
\end{equation}
where $\mathbb{P}_{k+2}(\mathbb{R})$ is the space of polynomials of degree at most $k + 2$.

Now, we denote the approximation of $p_{n+1}$ by $p_{n+1}^N$ defined as follows 
\begin{equation*}
p_{n+1}^N(\rho)=\sum_{i=0}^N a_i^{n+1,N}b_i(\rho).
\end{equation*}
In this article, we intend to use  Jacobi orthogonal polynomials as test functions for our theory discussions which generally have the weight function  $w^{\alpha,\beta}(\rho):=(1+\rho)^\alpha(1-\rho)^\beta$ \\
Based on the mentioned point and regarding the feature of the PDE equations in the model and since we intend to use the Legendre orthogonal polynomails (as a member of the big family Jacobi orthogonal polynomials with $\alpha=\beta=1$) as the trial functions, the approximation space is $L^2_w(-1,1)$ where
\begin{equation*}
L^2_w(-1,1)=\{f:\mathbb{R}\longrightarrow \mathbb{C}, \,\,\int_{-1}^1 f(\rho)w(\rho)d\rho<\infty\},
\end{equation*}
and $w(\rho)=(1+\rho)^{\alpha}(1-\rho)^{\beta}$, with the following inner product and norm
\begin{equation*}
(f,g)_{w^{\alpha ,\beta}}=\int_{-1}^1 (1+\rho)^{\alpha}(1-\rho)^{\beta}f(\rho)g(\rho)d\rho,~~ \Vert f\Vert^2_{w^{\alpha ,\beta}}=(f,f)_{w^{\alpha ,\beta}}.
\end{equation*}
The following equation is considered as the one in which it's results is an approximated solution of the equation (\ref{transformedp}).
\begin{eqnarray}\nonumber
&\Pi_N^{0,0} p_{n+1}^N(\rho) -h^*\Pi_N^{0,0}\underbrace{\left(\dfrac{(\rho+1)(2v_n^{ap}(1)-v_{n-1}^{ap}(1))}{R_{n+1}^{ap}} \right)}_{g_n(\rho)}\dfrac{\partial p_{n+1}^N(\rho)}{\partial \rho}-&\\
&\Pi_N^{0,0}\dfrac{4h^*D_p}{(R_{n+1}^{ap})^2(\rho+1)^2}\dfrac{\partial}{\partial \rho}\left( (\rho+1)^2\dfrac{\partial p_{n+1}^N(\rho)}{\partial \rho}\right) =
I_N^{0,0} g_n^*(\rho),&\label{pi}
\end{eqnarray}
where $\Pi_N^{0,0}$ is the orthogonal projection and $I_N^{0,0}$ is the Jacobi-Gauss-Lobatto interpolation operator with respect to $\rho$ and also
\begin{equation}\label{gnstar}
g_n^*(\rho)=\Pi_N^{0,0}\left( p_n^{ap}(\rho)-\dfrac{p_{n-1}^{ap}(\rho)-p_{n}^{ap}(\rho)}{3} \right)+h^*(2f(p_n^{ap}(\rho))-f(p_{n-1}^{ap}(\rho))).
\end{equation}
\textbf{Remark:} Let $f(t)$ and $g(t)$ are two continious functions. Considering (\ref{pi}) stems from the fact that calculating $(a_i,b_i)$ from $I_m^{0,0}f=\Pi_m^{0,0}f=g$ in which $I_m^{0,0} f=\sum_{i=0}^m a_i t^i$ and $\Pi_m^{0,0} g=\sum_{i=0}^m b_i t^i$ results in solving a linear system.
\section{Fully discrete convergence}
In order to prove the convergence of the presented method, we need to use the principle of mathematical induction. In so doing, we want to show that there exist positive constants $p^*$ and $R^*$
such that
\begin{equation*}
\vert p_k^{ap}-p_k\vert<p^*, ~~ \vert R_k^{ap}-R_k\vert<R^*,~~ \forall k=0,1,\cdots ,M, 
\end{equation*}
where $p_k$, $R_k$ are the exact solution of (\ref{transformedp})-(\ref{transformedR}) in $t_k$ respectively. First we suppose that
\begin{equation*}
\vert p_k^{ap}-p_k\vert<p^*, ~~ \vert R_k^{ap}-R_k\vert<R^*,~~ \forall k\leq n< M.
\end{equation*}
Now,  we need to present the following lemma to prove the convergence theorem.
\begin{lemma}\label{lemma1}
Let $p$ be the exact solution of (\ref{transformedp}) on the domain $[-1,1]\times [0,T]$, $p_{n+1}^{ap}=p_{n+1}^{N},~\forall n\geq 1$, in which $N$ is the number of collocation points  and $\dfrac{\partial^2 p}{\partial \rho^2}$ be $C^1$-smooth fumction. Then, for each $-1<\rho<1$, there exist positive constants $k_1$, $k_2$ and $k_3$ such that 
\begin{eqnarray}\nonumber
&(\dfrac{1}{2}-k_1h^*)\Vert \dfrac{\partial{(p_{n+1}^N(\rho)-p_{n+1,1}^N(\rho))}}{\partial \rho}\Vert_{w^{0,0}}^2\leq \dfrac{1}{2}\Vert \dfrac{\partial{(p_{n}^{ap}(\rho)-p_{n,1}^N(\rho))}}{\partial \rho}\Vert_{w^{0,0}}^2+&\\\nonumber
&\sum_{i=0}^n\dfrac{1}{3^i}h^* k_2 (\Vert p_{i,1}^N(\rho)-p_i^{ap}(\rho)\Vert_{w^{0,0}}^2+\Vert R_{i}-R_i^{ap}\Vert_{w^{0,0}}^2)+h^*\Vert E_p^{n,*}\Vert_{w^{0,0}}^2+h^*K(N),&
\end{eqnarray}
where $K(N)$ is the error generated by the spectral method and we have
\begin{equation}\label{epstar}
\Vert E_p^{n,*}\Vert_{\infty} \leq k_3 h^2, ~~\lim_{N\rightarrow \infty}K(N)=0,
\end{equation}
and  $p_1^N$ is a polynomial such that 
\begin{equation*}
I_N^{0,0} p_1^N=p_1^N,~~ \dfrac{\partial p_1^N}{\partial \rho}(-1,t)=0,~~\dfrac{\partial p_1^N}{\partial \rho}(1,t)=0,~~0\leq t\leq T,~~p_1^N(\rho,0)=0, ~~-1\leq \rho\leq 1,
\end{equation*}
and
\begin{equation*}
\lim_{N\rightarrow \infty}\left(\Vert I_N^{0,0}(\dfrac{\partial p}{\partial \rho}-\dfrac{\partial p_1^N}{\partial \rho})\Vert_{w^{0,0}}^2+\Vert I_N^{0,0}p-p_1^N\Vert_{w^{0,0}}^2+\Vert I_N^{0,0} \dfrac{\partial (p-p_1^N)}{(\rho+1)\partial \rho}\Vert_{w^{0,0}}^2+\Vert I_N^{0,0}\dfrac{\partial^2 (p-p_1^N)}{\partial\rho^2}\Vert_{w^{0,0}}^2\right)=0,
\end{equation*}
and also, $p_{i,1}^N(\rho)=p_1^N(\rho,t_i)$.
\end{lemma}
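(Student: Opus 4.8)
The plan is to run a discrete energy argument on the error $e_{n+1}:=p_{n+1}^{N}-p_{n+1,1}^{N}$ (which, since $p_{n+1}^{ap}=p_{n+1}^{N}$, is exactly the quantity in the statement). First I would write down the equation satisfied by the numerical iterate $p_{n+1}^{N}$, namely the collocation scheme (\ref{pi})--(\ref{gnstar}), and the equation satisfied by the spectral-friendly polynomial $p_{n+1,1}^{N}=p_{1}^{N}(\cdot,t_{n+1})$, which the statement endows with the homogeneous Neumann conditions, the identity $I_{N}^{0,0}p_{1}^{N}=p_{1}^{N}$, and the four-fold approximation property. The latter equation is obtained by inserting the exact $p$ into the same scheme: by (\ref{time_derivative})--(\ref{lenear}) this reproduces (\ref{transformedp}) up to the merged truncation error, collected into $E_{p}^{n,*}$ with $\Vert E_{p}^{n,*}\Vert_{\infty}\le k_{3}h^{2}$ as in (\ref{epstar}), while the replacement of $p$ by $p_{1}^{N}$ and of exact projections by $\Pi_{N}^{0,0},I_{N}^{0,0}$ produces the spectral defect collected into $K(N)$, using the stated limit. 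Subtracting yields a linear equation for $e_{n+1}$ whose left-hand operator is $e_{n+1}-h^{*}g_{n}\partial_\rho e_{n+1}-h^{*}\tfrac{4D_{p}}{(R_{n+1}^{ap})^{2}(\rho+1)^{2}}\partial_\rho((\rho+1)^{2}\partial_\rho e_{n+1})$ and whose right-hand side carries $e_{n}-\tfrac13(e_{n-1}-e_{n})$, the reaction-term difference between $2f(p_{n})-f(p_{n-1})$ and its approximate counterpart, the coefficient discrepancies coming from $g_{n}$ and from $(R_{n+1}^{ap})^{-2}$, and the sources $E_{p}^{n,*}$ and $K(N)$.

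The energy step is to test this error equation in $(\cdot,\cdot)_{w^{0,0}}$ against $-\partial_\rho^{2}e_{n+1}$; this is the pairing that produces $\Vert\partial_\rho e_{n+1}\Vert_{w^{0,0}}^{2}$ on the left, and every integration by parts is legitimate because the boundary terms carry the factor $\partial_\rho e_{n+1}(\pm1)$, which vanishes by the homogeneous Neumann conditions built into (\ref{basisbasis}) and into $p_{1}^{N}$. The zeroth-order term $e_{n+1}$ gives exactly $\Vert\partial_\rho e_{n+1}\Vert_{w^{0,0}}^{2}$. The convection term, via $\int g_{n}\partial_\rho e_{n+1}\,\partial_\rho^{2}e_{n+1}=-\tfrac12\int(\partial_\rho g_{n})(\partial_\rho e_{n+1})^{2}$ (here $\partial_\rho g_{n}$ is constant in $\rho$), is bounded by $k_{1}h^{*}\Vert\partial_\rho e_{n+1}\Vert_{w^{0,0}}^{2}$, which I move to the left to account for the $-k_{1}h^{*}$. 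The diffusion term yields the nonnegative piece $h^{*}\tfrac{4D_{p}}{(R_{n+1}^{ap})^{2}}\Vert\partial_\rho^{2}e_{n+1}\Vert_{w^{0,0}}^{2}$ (up to an absorbable cross term $\int\tfrac{2}{\rho+1}\partial_\rho e_{n+1}\,\partial_\rho^{2}e_{n+1}$), and this second-derivative term is precisely what I would use to soak up the $\Vert\partial_\rho^{2}e_{n+1}\Vert_{w^{0,0}}^{2}$ pieces generated when the lower-order forcing terms are paired with $-\partial_\rho^{2}e_{n+1}$. Finally, applying Young's inequality to $\big(e_{n}-\tfrac13(e_{n-1}-e_{n}),-\partial_\rho^{2}e_{n+1}\big)_{w^{0,0}}=\big(\partial_\rho(\tfrac43 e_{n}-\tfrac13 e_{n-1}),\partial_\rho e_{n+1}\big)_{w^{0,0}}$ splits off $\tfrac12\Vert\partial_\rho e_{n+1}\Vert_{w^{0,0}}^{2}$. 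Collecting the three $\Vert\partial_\rho e_{n+1}\Vert_{w^{0,0}}^{2}$ contributions gives the coefficient $\tfrac12-k_{1}h^{*}$ on the left.

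It then remains to recognise the right-hand side. The surviving term $\tfrac12\Vert\partial_\rho(\tfrac43 e_{n}-\tfrac13 e_{n-1})\Vert_{w^{0,0}}^{2}$ is governed by the two-step recursion $x_{n+1}=\tfrac43 x_{n}-\tfrac13 x_{n-1}$, whose characteristic roots are $1$ and $\tfrac13$; iterating this relation is exactly what produces the head term $\tfrac12\Vert\partial_\rho e_{n}\Vert_{w^{0,0}}^{2}$ together with the geometric weights $3^{-i}$ in the sum. The reaction difference is estimated through the affine (hence Lipschitz) dependence of $f$ on $p$, and, being paired directly with the test function and split by Young, it contributes $\Vert p_{i,1}^{N}-p_{i}^{ap}\Vert_{w^{0,0}}^{2}$ with its $\partial_\rho^{2}e_{n+1}$ counterpart absorbed by the diffusion; the coefficient discrepancies in $g_{n}$ and in $(R_{n+1}^{ap})^{-2}$ are controlled by tracing the errors of $v$ and $R$ back to those of $p$ and $R$ through the elliptic relation (\ref{transformedv}) and the update (\ref{wiera}), delivering the $\Vert R_{i}-R_{i}^{ap}\Vert_{w^{0,0}}^{2}$ contributions; together these yield the factor $k_{2}h^{*}\sum_{i=0}^{n}3^{-i}(\dots)$. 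The truncation and spectral defects supply $h^{*}\Vert E_{p}^{n,*}\Vert_{w^{0,0}}^{2}$ and $h^{*}K(N)$ with the stated bounds.

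\textbf{Main obstacle.} The delicate point is not the energy identity but the control of the solution-dependent coefficients. Since $g_{n}$ and $\tfrac{4D_{p}}{(R_{n+1}^{ap})^{2}}$ depend on the velocity $v$ and the radius $R$, which are themselves coupled to $p$ through (\ref{transformedv}) and (\ref{transformedR}), I must quantify how a perturbation of $p$ and $R$ propagates into these coefficients and still close the estimate linearly in $\Vert p_{i,1}^{N}-p_{i}^{ap}\Vert_{w^{0,0}}$ and $\Vert R_{i}-R_{i}^{ap}\Vert_{w^{0,0}}$. This is where the induction hypothesis $\vert p_{k}^{ap}-p_{k}\vert<p^{*}$, $\vert R_{k}^{ap}-R_{k}\vert<R^{*}$ is indispensable: it keeps $R_{n+1}^{ap}$ bounded above and away from $0$, so that $(R_{n+1}^{ap})^{-2}$ and $\Vert\partial_\rho g_{n}\Vert_{\infty}$ stay uniformly bounded and the diffusion coefficient is large enough to absorb the Young remainders, and it keeps the reaction terms Lipschitz on the relevant range. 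A second technical hurdle is that $\Pi_{N}^{0,0}$ and $I_{N}^{0,0}$ do not commute with $\partial_\rho$; reconciling them with the integrations by parts, so that the mismatch is genuinely of the form $K(N)\to0$, requires the Jacobi--Gauss--Lobatto approximation estimates together with the $C^{1}$-smoothness of $\partial_\rho^{2}p$ assumed in the statement.
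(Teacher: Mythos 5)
Your proposal follows essentially the same route as the paper's proof: the same auxiliary polynomial $p_1^N$ with its interpolation, Neumann, and approximation properties, the same energy pairing of the error equation for $e_{n+1}=p_{n+1}^N-p_{n+1,1}^N$ against $\partial_\rho^2 e_{n+1}$ in $(\cdot,\cdot)_{w^{0,0}}$ with boundary terms killed by the Neumann conditions, the same Cauchy--Schwarz/Young absorption leaving the $\frac12-k_1h^*$ coefficient, the same unrolling of the two-step $\frac43,-\frac13$ structure of the non-classical discretization to generate the geometric weights $3^{-i}$ (the paper does this via the recurrence (\ref{recurence}) applied repeatedly, which is exactly your characteristic-root-$\frac13$ mechanism), and the same collection of truncation into $E_p^{n,*}$ and spectral defects into $K(N)$. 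The only cosmetic deviation is your treatment of the cross term $\int \frac{2}{\rho+1}\,\partial_\rho e_{n+1}\,\partial_\rho^2 e_{n+1}\,d\rho$ as a remainder to absorb, whereas after one more integration by parts it is exactly the nonnegative quantity $\bigl\Vert \frac{\partial_\rho e_{n+1}}{\rho+1}\bigr\Vert_{w^{0,0}}^2$, which the paper keeps as a good term on the left-hand side of (\ref{inequality1}).
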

\begin{proof}
Since $\dfrac{\partial^2 p}{\partial \rho^2}$ is a $C^1$-smooth function, for each $N\in \Bbb {N}$, there exists a polynomial $p_1^N$ such that
\begin{equation*}
I_N^{0,0} p_1^N=p_1^N,~~ \dfrac{\partial p_1^N}{\partial \rho}(-1,t)=0,~~\dfrac{\partial p_1^N}{\partial \rho}(1,t)=0,~~0\leq t\leq T,~~p_1^N(\rho,0)=0, ~~-1\leq \rho\leq 1,
\end{equation*}
and
\begin{equation*}
\lim_{N\rightarrow \infty}\left(\Vert I_N^{0,0}(\dfrac{\partial p_1^N}{\partial \rho}-\dfrac{\partial p}{\partial \rho})\Vert_{w^{0,0}}^2+\Vert I_N^{0,0}p-p_1^N\Vert_{w^{0,0}}^2+\Vert I_N^{0,0} \dfrac{\partial (p-p_1^N)}{(\rho+1)\partial \rho}\Vert_{w^{0,0}}^2+\Vert I_N^{0,0}\dfrac{\partial^2 (p-p_1^N)}{\partial\rho^2}\Vert_{w^{0,0}}^2\right)=0.
\end{equation*}
Now by taking the inner product of both sides of (\ref{pi}) we have
\begin{eqnarray*}
&( \Pi_N^{0,0}(p_{n+1}^N(\rho)- p_{n+1,1}^N(\rho)),\dfrac{\partial^2 (p_{n+1}^N(\rho)-p_{n+1,1}^N(\rho))}{\partial \rho^2})_{w^{0,0}}+&\\
&\dfrac{4h^*D_p}{(R^{ap}_{n+1})^2}( \dfrac{1}{(\rho+1)^2} \dfrac{\partial}{\partial \rho}((\rho+1)^2\dfrac{\partial(p_{n+1}^N(\rho)-p_{n+1,1}^N(\rho))}{\partial \rho}),\dfrac{\partial^2 (p_{n+1}^N(\rho)-p_{n+1,1}^N(\rho))}{\partial \rho^2}  )_{w^{0,0}}=&\\
&h^*( \Pi_N^{0,0}g_n(\rho)\dfrac{\partial (p_{n+1}^N(\rho)- p_{n+1,1}^N(\rho))}{\partial \rho},\dfrac{\partial^2 (p_{n+1}^N(\rho)-p_{n+1,1}^N(\rho))}{\partial \rho^2})_{w^{0,0}}+&\\
&( I_N (g_n^*(\rho)-Lp_{n+1,1}^N(\rho)),\dfrac{\partial^2 (p_{n+1}^N(\rho)-p_{n+1,1}^N(\rho))}{\partial \rho^2})_{w^{0,0}},&
\end{eqnarray*}
where $p_{n+1,1}^N(\rho):=p_1^N(\rho,t_{n+1})$ and $L$ is defined as follows
\begin{equation}\label{definL}
L\phi :=\phi-h^*g_n(\rho)\dfrac{\partial \phi}{\partial \rho}-\dfrac{4h^*D_p}{(R^{ap}_{n+1})^2}\left( \dfrac{1}{(\rho+1)^2} \dfrac{\partial}{\partial \rho}\left((\rho+1)^2\dfrac{\partial\phi}{\partial \rho}\right)\right).
\end{equation}
Therefore, using Caushy-Schwarz inequality, there exists a positive $c_5$ such that 
\begin{eqnarray}\nonumber
&\Vert \dfrac{\partial (p_{n+1}^N(\rho)-p_{n+1,1}^N(\rho))}{\partial \rho}\Vert^2_{w^{0,0}}+\dfrac{4h^*D_p}{(R_{n+1}^{ap})^2}\Vert \dfrac{\partial^2(p_{n+1}^N(\rho)-p_{n+1,1}^N(\rho))}{\partial\rho^2} \Vert^2_{w^{0,0}}&\\\nonumber
&+\dfrac{4h^*D_p}{(R_{n+1}^{ap})^2}\Vert  \dfrac{\partial (p_{n+1}^N(\rho)-p_{n+1,1}^N(\rho))}{(\rho+1)\partial \rho}\Vert_{w^{0,0}}^2 \leq\vert(I_N(g_n^*(\rho)-Lp_{n+1,1}^N(\rho)),\dfrac{\partial^2(p_{n+1}^N(\rho)-p_{n+1,1}^N(\rho))}{\partial \rho^2})_{w^{0,0}}\vert +&\\
&h^*c_5 \Vert \dfrac{\partial (p_{n+1}^N(\rho)-p_{n+1,1}^N(\rho))}{\partial \rho}\Vert_{w^{0,0}}^2.&\label{inequality1}
\end{eqnarray}
In addition, from (\ref{gnstar}) we obtain that
\begin{eqnarray}\nonumber
&\vert(I_N^{0,0}(g_n^*(\rho)-Lp_{n+1,1}^N(\rho)),\dfrac{\partial^2 (p_{n+1}^N(\rho)-p_{n+1,1}^N(\rho))}{\partial \rho^2})_{w^{0,0}} \vert \leq &\\\nonumber
&\vert {g_n^1} ,\dfrac{\partial^2 (p_{n+1}^N(\rho)-p_{n+1,1}^N(\rho))}{\partial \rho^2})_{w^{0,0}}\vert + &\\
&\vert (p_n^{ap}(\rho)-p_{n,1}^{N}(\rho)-\dfrac{p_{n-1}^{ap}(\rho)-p_{n-1,1}^N(\rho)-p_n^{ap}(\rho)+p_{n,1}^{N}(\rho)}{3},\dfrac{\partial^2 (p_{n+1}^N(\rho)-p_{n+1,1}^N(\rho))}{\partial \rho^2})_{w^{0,0}}\vert ,&\label{inequality2}
\end{eqnarray}
where 
\begin{equation}
g_n^1:=(I_N^{0,0}(Lp_{n+1,1}^N(\rho)-p_{n,1}^N(\rho)+\dfrac{p_{n-1,1}^N(\rho)-p_{n,1}^N(\rho)}{3})-h^*I_N^{0,0}(2f(p_n^{ap}(\rho))-f(p_{n-1}^{ap}(\rho))).
\end{equation}
Therefore, from (\ref{inequality1}) and (\ref{inequality2}) and using Cauchy-Schwarz and Young inequalities we get that there exist positive constants $c_6$ and $c_7$  such that
\begin{eqnarray}\nonumber
&\dfrac{1}{2}\Vert\dfrac{\partial (p_{n+1}^N(\rho)-p_{n+1,1}^N(\rho))}{\partial \rho}\Vert_{w^{0,0}}^2+\dfrac{3h^*D_p}{(R_{n+1}^{ap})^2}\Vert \dfrac{\partial^2 (p_{n+1}^N(\rho)-p_{n+1,1}^N(\rho))}{\partial \rho^2}\Vert_{w^{0,0}}^2+&\\\nonumber
&\dfrac{4h^*D_p}{(R_{n+1}^{ap})^2}\Vert \dfrac{\partial (p_{n+1}^N(\rho)-p_{n+1,1}^N(\rho))}{(\rho+1)\partial \rho}\Vert_{w^{0,0}}^2\leq c_7h^*\Vert  2f(p_n(\rho))-f(p_{n-1}(\rho))+E_p^{n,2}-2f(p_n^{ap}(\rho))+f(p_{n-1}^{ap}(\rho)) \Vert_{w^{0,0}}^2+&\\\nonumber
&h^*\vert (L_n^2 p_{1}^N(\rho)-L_n^2 p(\rho),\dfrac{\partial^2(p_{n+1}^N(\rho)-p_{n+1,1}^N(\rho))}{\partial \rho^2})_{w^{0,0}} \vert+&\\\nonumber
&c_6h^*\Vert \dfrac{\partial (p_{n+1}^N(\rho)-p_{n+1,1}^N(\rho))}{\partial \rho}\Vert_{w^{0,0}}^2+\dfrac{1}{2}\Vert \dfrac{\partial (p_n^{ap}(\rho)-p_{n,1}^{ap}(\rho))}{\partial \rho}\Vert_{w^{0,0}}^2+&\\
&\dfrac{1}{3}\Big| ( \dfrac{\partial(p_{n-1}^{ap}(\rho)-p_{n-1,1}^N(\rho)-p_n^{ap}(\rho)+p_{n,1}^N(\rho))}{\partial \rho},\dfrac{\partial(p_{n+1}^N(\rho)-p_{n+1,1}^N(\rho))}{\partial \rho})_{w^{0,0}}\big|, &\label{instead}
\end{eqnarray}
in which, for a given function $u(\rho,t)$
\begin{equation*}
L_n^2u(\rho,t)=Lu_{n+1}(\rho)-u_n(\rho)+\dfrac{u_{n-1}(\rho)-u_n(\rho)}{3},
\end{equation*}
and $L$ is defined in (\ref{definL}). On the other hand, from (\ref{pi}) it can be easily conclude that there exist positive constants $c_8$ and $c_9$ such that 
\begin{eqnarray}\nonumber
&\vert (\dfrac{\partial (p_n^{ap}(\rho)-p_{n,1}^N(\rho)-p_{n-1}^{ap}(\rho)+p_{n-1,1}^N(\rho))}{\partial \rho},\dfrac{\partial(p_{n+1}^N(\rho)-p_{n+1,1}^N(\rho))}{\partial \rho})_{w^{0,0}}\vert \leq &\\\nonumber
&(\dfrac{6h^*D_p}{(R_{n+1}^{ap})^2})\Vert \dfrac{\partial^2 (p_{n+1}^N(\rho)-p_{n+1,1}^N(\rho))}{\partial \rho^2}\Vert_{w^{0,0}}+ \dfrac{h^*}{4}\vert( L_{i-1}^2 p_{1}^N(\rho)-L_{i-1}^2p(\rho),\dfrac{\partial^2(p_{n+1}^N(\rho)-p_{n+1,1}^N(\rho))}{\partial \rho^2})_{w^{0,0}}\vert +&\\ \nonumber
&\dfrac{h^*}{2}\vert( L_i^2 p_{1}^N(\rho)-L_{i}^2 p(\rho) ,\dfrac{\partial^2(p_{n+1}^N(\rho)-p_{n+1,1}^N(\rho))}{\partial \rho^2})_{w^{0,0}}\vert +&\\
&\dfrac{1}{3}\Big|( \dfrac{\partial (p_{n-1}^{ap}(\rho)-p_{n-1,1}^N(\rho)-p_{n-2}^{ap}(\rho)+p_{n-2,1}^N(\rho))}{\partial \rho},\dfrac{\partial(p_{n+1}^N(\rho)-p_{n+1,1}^N(\rho))}{\partial \rho})_{w^{0,0}} \Big| .& \label{recurence}
\end{eqnarray}
Then, by applying the recurence relation (\ref{recurence}) repeatedly in (\ref{instead}) one can deduce that there exist constants $c_{10}$ and $c_{11}$ such that we have
\begin{eqnarray}\nonumber
&(\dfrac{1}{2}-c_{10}h^*)\Vert \dfrac{\partial{(p_{n+1}^N(\rho)-p_{n+1,1}^N(\rho))}}{\partial \rho}\Vert_{w^{0,0}}^2\leq \dfrac{1}{2}\Vert \dfrac{\partial{(p_{n}^{ap}(\rho)-p_{n,1}^N(\rho))}}{\partial \rho}\Vert_{w^{0,0}}^2+&\\
&\sum_{i=0}^n\dfrac{1}{3^i}h^* c_{11} (\Vert p_{i,1}^N(\rho)-p_i^{ap}(\rho)\Vert_{w^{0,0}}^2+\Vert R_{i}-R_i^{ap}\Vert_{w^{0,0}}^2)+h^*\Vert E_p^{n,*}\Vert_{w^{0,0}}^2+h^*K(N),&\label{prooflemma}
\end{eqnarray}
where 
\begin{equation}\label{khata}
\Vert E_p^{n,*}\Vert_{\infty}<c^*{h}^2,~~\lim_{N\rightarrow \infty} K(N)=0,
\end{equation}
and $c^*$ is a positive constant.
\end{proof}
\begin{theorem}\label{convergencetheorem}
Let $p_{n+1}^{ap}=p_{n+1}^N$. Under the assumption of Lemma \ref{lemma1}, there exist positive constants $M_3$ and $M_4$ such that
\begin{equation*}
\max_{k=0,\cdots ,n+1} \{\xi_k\}\leq M_3 (e^{M_4 T})({h}^2+(K(N)^{\dfrac{1}{2}})),
\end{equation*}
where
\begin{eqnarray*}
&\xi_k=\Vert \dfrac{\partial (p_k^{ap}(\rho)-p_k(\rho))}{\partial \rho}\Vert_{w^{0,0}}+\vert R_k^{ap}-R_k\vert,&
\end{eqnarray*}
and $K(N)$ is the error generated by the spectral method so that
\begin{eqnarray*}
&\lim_{N\rightarrow \infty }K(N)=0.&
\end{eqnarray*}
\end{theorem}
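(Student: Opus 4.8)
The plan is to convert the per–step estimate of Lemma \ref{lemma1} into a discrete Gronwall inequality for the combined quantity $\xi_k$, carried out inside the induction on $k$ already set up before the lemma, keeping throughout the a priori bounds $|p_k^{ap}-p_k|<p^*$ and $|R_k^{ap}-R_k|<R^*$ for $k\le n$, which fix the constants below. First I would split the $p$–error through the auxiliary polynomial $p_{k,1}^N=p_1^N(\rho,t_k)$ of Lemma \ref{lemma1}: writing $p_k^{ap}=p_k^N$ and using the triangle inequality,
\[
\Vert \dfrac{\partial (p_k^{ap}-p_k)}{\partial \rho}\Vert_{w^{0,0}}\le \Vert \dfrac{\partial (p_k^{N}-p_{k,1}^N)}{\partial \rho}\Vert_{w^{0,0}}+\Vert \dfrac{\partial (p_{k,1}^N-p_k)}{\partial \rho}\Vert_{w^{0,0}}.
\]
The first term is precisely the quantity estimated by (\ref{prooflemma}); the second is a fixed spectral-approximation error which, by the defining properties of $p_1^N$, tends to $0$ with $N$ and is absorbed into $K(N)$. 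This isolates the dynamically propagated error from the interpolation error.

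Next I would derive the companion recursion for the radius and close the coupling through $v$. Subtracting (\ref{wiera}) from (\ref{khyksl}) gives
\[
R_{n+1}-R_{n+1}^{ap}=(R_n-R_n^{ap})-\dfrac{(R_{n-1}-R_{n-1}^{ap})-(R_n-R_n^{ap})}{3}+h^*\big(v_n(1)-v_n^{ap}(1)\big)-h^*E_R^n .
\]
The essential step is to bound $|v_n(1)-v_n^{ap}(1)|$ by errors already present: integrating the elliptic relation (\ref{transformedv}) and its approximate analogue from $-1$ to $1$ (with $v(-1)=0$) expresses $v_n(1)$ through $p_n$ and $R_n$, and subtracting yields $|v_n(1)-v_n^{ap}(1)|\le C\big(\Vert p_n-p_n^{ap}\Vert_{w^{0,0}}+|R_n-R_n^{ap}|\big)$, where $C$ depends on $p^*,R^*$ and on the bounded coefficient functions and where the a priori bound keeps $p_n(1)$ away from $0$ in the denominator. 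Thus the $R$–recursion feeds only on the current $p$– and $R$–errors, exactly the data appearing on the right of (\ref{prooflemma}).

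Then I would add a small multiple of $|R_k-R_k^{ap}|^2$ to (\ref{prooflemma}) to build a single energy $\eta_k$ comparable to $\xi_k^2$, using a Poincaré-type inequality to dominate the $L^2$–type terms $\Vert p_{i,1}^N-p_i^{ap}\Vert_{w^{0,0}}^2$ on the right by the $H^1$–seminorm energy. Choosing $h$ small enough that $\tfrac12-c_{10}h^*>0$ and dividing (\ref{prooflemma}) by this coefficient turns the leading ratio $\tfrac{1/2}{1/2-c_{10}h^*}$ into $1+O(h^*)$; after bounding the geometric weights via $\sum_{i=0}^n 3^{-i}\le \tfrac32$ and replacing each summand by $\max_{i\le n}\eta_i$ one obtains, with $m_n:=\max_{0\le i\le n}\eta_i$,
\[
m_{n+1}\le (1+C_1h^*)\,m_n+C_2 h^*\big(\Vert E_p^{n,*}\Vert_{w^{0,0}}^2+K(N)\big).
\]
Iterating this discrete Gronwall inequality and using $h^*M=\tfrac23T$ gives $(1+C_1h^*)^M\le e^{C_1h^*M}\le e^{M_4T}$; the accumulated truncation source is $\sum_{n<M}h^*\Vert E_p^{n,*}\Vert_{w^{0,0}}^2\le C\,T h^4$ by (\ref{khata}) and the spectral source is $\sum_{n<M}h^*K(N)\le \tfrac23 T\,K(N)$. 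Hence $\max_k\eta_k\le C e^{M_4T}(h^4+K(N))$, and taking square roots — since $\xi_k$ is the un-squared norm and $\sqrt{a+b}\le\sqrt a+\sqrt b$ — yields the claimed bound $M_3 e^{M_4T}(h^2+K(N)^{1/2})$.

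The main obstacle is to run this argument with constants uniform in $n$, which simultaneously closes the induction. The constants $k_1,k_2,k_3$ of Lemma \ref{lemma1} and the constant $C$ in the $v$–bound all depend on $p^*,R^*$, since they control the nonlinearity $f$, the convection factor $(\rho+1)v$ and the term $(R_{n+1}^{ap})^{-2}$ in the diffusion operator; the estimate on $\xi_{n+1}$ must therefore be shown to stay below $p^*$ and $R^*$ for all admissible $h$ and $N^{-1}$, so that the inductive hypothesis is reproduced at step $n+1$. The delicate points are that the exponent $M_4$ must not depend on $n$ — which is exactly why the $3^{-i}$ weighting is needed to keep the accumulated coupling term at size $O(h^*)$ per step rather than $O(1)$ — and that the Poincaré/embedding passage from the $L^2$–control on the right of (\ref{prooflemma}) to the $H^1$–seminorm energy does not degrade the order in $h$. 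Verifying this uniformity is where the real work lies.
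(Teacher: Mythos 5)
Your proposal follows essentially the same route as the paper's proof: the per-step bound of Lemma \ref{lemma1} is converted into a recurrence for the squared energy $\phi_k=\Vert \frac{\partial (p_k^{ap}-p_k)}{\partial \rho}\Vert_{w^{0,0}}^2+\vert R_k^{ap}-R_k\vert^2$, iterated as a discrete Gronwall inequality with $(1+M_2h^*)^{n+1}\leq e^{M_2 h^*(n+1)}\leq e^{M_4T}$ via $h^*M\sim T$, and then square-rooted (using $\sqrt{a+b}\leq\sqrt{a}+\sqrt{b}$) to pass from $\phi_k$ to $\xi_k$. The extra details you supply are exactly what the paper leaves implicit: the $R$-recursion obtained by subtracting (\ref{wiera}) from (\ref{khyksl}), the elliptic bound on $\vert v_n(1)-v_n^{ap}(1)\vert$ that closes the coupling, the triangle-inequality split through $p_{k,1}^N$ with the approximation part absorbed into $K(N)$, and the correct $h^4$ accounting of the accumulated truncation source (note the paper's intermediate display writes $h^*(h^2)$, whereas $h^*\Vert E_p^{n,*}\Vert_{w^{0,0}}^2\lesssim h^*h^4$ is what its stated final rate $h^2+K(N)^{1/2}$ actually requires, so your bookkeeping is the consistent one).
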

\begin{proof}
Using (\ref{prooflemma}) one can conclude that there exists a positive constant $M_1$ such that
\begin{equation*}
\max_{k=0,\cdots, n+1}\{\phi_k\}\leq (1+M_1h^*)\phi_n+(1+M_1h^*)^2\max_{k=0,1,\cdots ,n}\{\phi_k\}+(1+M_1h^*)(h^*\Vert E_p^{n,*}\Vert_{w^{0,0}}^2+h^*K(N)),
\end{equation*}
where
\begin{eqnarray*}
&\phi_k=\Vert \dfrac{\partial (p_k^{ap}(\rho)-p_k(\rho))}{\partial \rho}\Vert_{w^{0,0}}^2+\vert R_k^{ap}-R_k\vert^2, &\\
&\lim_{N\rightarrow \infty }K(N)=0,&
\end{eqnarray*}
and $\Vert E_p^{n,*}\Vert_{\infty}$ is defined in (\ref{khata}).
 Therefore, there exists a positive constant $M_2$ such that 
\begin{equation*}
\max_{k=0,\cdots ,n+1}\{\phi_k\}\leq (1+M_2h^*)^{n+1}\phi_0 +\Big|\dfrac{(1+M_2h^*)^{n+1}-1}{M_2h^*}\Big|(h^* (h^2)+h^*K(N)).
\end{equation*}
Finally, it may be concluded that there exist positive  constants $M_3$ and $M_4$ such that
\begin{equation}\label{xi}
\max_{k=0,\cdots ,n+1} \{\xi_k\}\leq M_3 (e^{M_4 T})({h}^2+(K(N))^{\dfrac{1}{2}}).
\end{equation}
where
\begin{eqnarray*}
&\xi_k=\Vert \dfrac{\partial (p_k^{ap}(\rho)-p_k(\rho))}{\partial \rho}\Vert_{w^{0,0}}+\vert R_k^{ap}-R_k\vert,&\\
&\lim_{N\rightarrow \infty }K(N)=0.&
\end{eqnarray*}
\end{proof}
Employing the general sobolev inequality, there exists a positive constant $M_5$ such that
\begin{equation}\label{generalsobolev}
\vert p_{n+1}^{ap}(\rho)-p_{n+1}(\rho)\vert \leq M_5\Vert p_{n+1}^{ap}(\rho)-p_{n+1}(\rho)\Vert_{w^{0,0}}.
\end{equation}
Using (\ref{xi}) and (\ref{generalsobolev}), we can select proper $N$ and $h$ such that
\begin{equation}\label{pstar}
\vert p_{n+1}^{ap}(\rho)-p_{n+1}(\rho)\vert \leq p^*.
\end{equation}
So, by considering (\ref{pstar}) and Theorem \ref{convergencetheorem}, we can conclude that the sequence $\{p_n^{ap},R_n^{ap}\}_{n=0}^{\infty}$ converges to the exact solution of problem (\ref{transformedp})-(\ref{transformedR}) on $[-1,1]\times [0,T]$.
\section{Stability}

This section is presented to prove the stability of the presented method. Partial differential equations are well-known to be stable if the small perturbations in the right-hand side of the equation can lead to arbitrarily small changes in the solution \cite{corcoran2018stability,manchanda2020mathematical}. So, to prove the stability, we first need to construct a perturbed problem using the functions $\epsilon_1(\rho,t)$ and $\epsilon_2(\rho,t)$  as follows
\begin{eqnarray}\nonumber
&\dfrac{\partial p}{\partial t}(\rho,t)-\dfrac{(\rho+1)v(1,t)}{R(t)}\dfrac{\partial p}{\partial \rho}(\rho,t)-\dfrac{4D_p}{R(t)^2(\rho +1)^2}\dfrac{\partial}{\partial \rho}((\rho +1)^2\dfrac{\partial p(\rho,t)}{\partial \rho})=&\\\nonumber
&1-p(\rho,t)-\delta_p (a(t))(1-p(\rho,t))-(1-I)\beta(a(t))p(\rho ,t)+\epsilon_1(\rho,t),&\\
&p(\rho,0)=p_0(\rho),~~\dfrac{\partial p}{\partial \rho}(-1,t)=\dfrac{\partial p}{\partial \rho}(1,t)=0,&\label{purturbedp}
\end{eqnarray}
\begin{eqnarray}\nonumber
&\dfrac{\partial }{\partial \rho}((\rho+1)^2v(\rho ,t)p(\rho,t))=&\\\nonumber
&\dfrac{R(t)(\rho +1)^2}{2}\left( \alpha_p (a(t))p(\rho,t)+1-p(\rho,t)-\delta_p(a(t)) p(\rho,t)-\delta_q(a(t))(1-p(\rho,t))\right)+\epsilon_2(\rho,t),&\\
&v(0,t)=0,&
\end{eqnarray}
\begin{eqnarray}\nonumber
&\dfrac{\mathit{d}R(t)}{\mathit{d}t}=v(1,t),&\\
&R(0)=1.&\label{perturbedR}
\end{eqnarray}
Now, considering this purturbed model of (\ref{transformedp})-(\ref{transformedR}), the stability of the presented method is proved in the following theorem.
\begin{theorem}\label{stabilitytheorem}
Let $\epsilon^*$ be a positive constant and $\vert \epsilon_i\vert <\epsilon^*,~(i=1, 2)$. Then under the assumptions of Lemma \ref{lemma1}, there exist positive constants $M_3^*$ and $M_4^*$ such that
\[\max _{k=0,\cdots,n+1}\{\xi_k\}\leq M_3^*(e^{M_4^*T})({h}^2+\epsilon^*+(K(N))^{\dfrac{1}{2}}),\]
where 
$$\xi_k=\Vert \dfrac{\partial (p_k^{ap}(\rho)-p_k(\rho))}{\partial \rho}\Vert_{w^{0,0}}+\vert R_k^{ap}-R_k\vert.$$
and $K(N)$ is the error generated by the spectral method so that
\begin{eqnarray*}
&\lim_{N\rightarrow \infty }K(N)=0.&
\end{eqnarray*}
\end{theorem}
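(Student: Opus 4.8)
The plan is to repeat, almost verbatim, the energy-estimate-plus-discrete-Gronwall argument already used for Lemma \ref{lemma1} and Theorem \ref{convergencetheorem}, the only structural difference being that the symbols $p_k,R_k$ now denote the exact solution of the \emph{perturbed} system (\ref{purturbedp})--(\ref{perturbedR}) rather than of (\ref{transformedp})--(\ref{transformedR}). Consequently, when the exact (perturbed) solution is inserted into the discrete operator $L$ of (\ref{definL}), the residual it produces is no longer the pure truncation term $E_p^{n,*}$ of Lemma \ref{lemma1}: it now carries the additive forcing $\epsilon_1$ coming directly from the parabolic equation, together with the indirect contribution of $\epsilon_2$ propagated through the coupled velocity and radius. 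The whole skeleton (inner product against $\partial^2_\rho(p_{n+1}^N-p_{n+1,1}^N)$, Cauchy--Schwarz and Young, the telescoping recurrence (\ref{recurence})) is reused unchanged, so I would present the argument as a perturbed version of (\ref{prooflemma}).

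First I would rerun the derivation of the key inequality (\ref{prooflemma}). Since $|\epsilon_1|<\epsilon^*$, the single new ingredient is the extra inner product $(\epsilon_1,\partial^2_\rho(\cdots))_{w^{0,0}}$; applying Young's inequality and absorbing the $\partial^2_\rho$ factor into the coercive diffusion term $\frac{3h^*D_p}{(R^{ap}_{n+1})^2}\Vert\partial^2_\rho(\cdots)\Vert^2$ — exactly as the truncation residual was absorbed in Lemma \ref{lemma1} — replaces the summand $h^*\Vert E_p^{n,*}\Vert_{w^{0,0}}^2$ by $h^*\Vert E_p^{n,*}\Vert_{w^{0,0}}^2+C\,h^*(\epsilon^*)^2$. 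This yields an inequality identical to (\ref{prooflemma}) but with one additional $h^*(\epsilon^*)^2$ surplus on the right.

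Then I would feed this enlarged lemma into the discrete Gronwall step of Theorem \ref{convergencetheorem}. The recursion $\max\{\phi_k\}\le(1+M_2^*h^*)^{n+1}\phi_0+|\frac{(1+M_2^*h^*)^{n+1}-1}{M_2^*h^*}|(h^*(h^2+(\epsilon^*)^2)+h^*K(N))$ is obtained exactly as before, now with $\phi_0=0$ because the perturbed exact problem and the numerical scheme share the initial data $R(0)=1$ and $p(\rho,0)=p_0$. Bounding $(1+M_2^*h^*)^{n+1}\le e^{M_4^*T}$, taking square roots, and using subadditivity $\sqrt{a+b}\le\sqrt a+\sqrt b$ turns the $(\epsilon^*)^2$ surplus into an additive $\epsilon^*$, producing exactly $\max_k\{\xi_k\}\le M_3^*(e^{M_4^*T})(h^2+\epsilon^*+(K(N))^{1/2})$.

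The hard part will be the coupling, i.e. controlling how $\epsilon_2$ propagates. The perturbation $\epsilon_2$ sits in the elliptic equation for $v$, and $v(1,t)$ then drives both the radius ODE $dR/dt=v(1,t)$ and the advection coefficient $g_n$ and diffusion coefficient $\frac{4D_p}{(R^{ap}_{n+1})^2}$ of the parabolic equation. To close the estimate I would integrate the (first order in $\rho$) elliptic equation from $-1$ to $\rho$ using $v(-1,t)=0$, expressing $(\rho+1)^2 v p$ explicitly in terms of $p$ and $\int_{-1}^{\rho}\epsilon_2$, and thereby bound $|v^{ap}(1)-v(1)|$ by a constant times $\Vert p^{ap}-p\Vert+\epsilon^*$ (this step needs $p(1,t)$ bounded away from zero, which is guaranteed by the regularity and induction hypotheses). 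The radius ODE is then handled by discrete summation, contributing $|R^{ap}_k-R_k|$ terms that are already carried inside $\phi_k$ plus a clean $\epsilon^*$ surplus. Verifying that these coupled contributions stay $O(\epsilon^*)$ uniformly in $k\le n$, and that the induction hypothesis $|p_k^{ap}-p_k|<p^*$, $|R_k^{ap}-R_k|<R^*$ keeps every denominator $(R^{ap}_{n+1})^2$ bounded away from zero, is the only genuinely new bookkeeping relative to Theorem \ref{convergencetheorem}.
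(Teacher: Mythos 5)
Your proposal follows essentially the same route as the paper: the perturbations $\epsilon_1,\epsilon_2$ are absorbed as bounded source terms so that a perturbed analogue of Lemma \ref{lemma1} holds with an extra $h^*\epsilon^*$-type surplus, and then the identical discrete Gronwall recursion from Theorem \ref{convergencetheorem} yields the bound $M_3^*(e^{M_4^*T})(h^2+\epsilon^*+(K(N))^{1/2})$. In fact you supply details the paper waves away in one sentence (the propagation of $\epsilon_2$ through the elliptic equation and radius ODE, and the Young-inequality bookkeeping that turns $(\epsilon^*)^2$ inside $\phi_k$ into an additive $\epsilon^*$ after taking square roots, which is arguably cleaner than the paper's own writing of the recursion).
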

Since the perturbed terms are bounded, we can consider them as a part of the source term and a lemma similar to lemma 3.1 can be proved for the perturbed model.  Now we prove the theorem as follows
\begin{proof}
If we solve the perturbed problem (\ref{purturbedp})-(\ref{perturbedR}) using the presented method, one can conclude that there exists a positive constant $M_1^*$ such that
\begin{equation*}
\max_{k=0,\cdots, n+1}\{\phi_k\}\leq (1+M_1^*h^*)\phi_n+(1+M_1^*h^*)^2\max_{k=0,1,\cdots ,n}\{\phi_k\}+(1+M_1^*h^*)(h^*(h^2)+h^*K(N)+h^*\epsilon^*),
\end{equation*}
where
\begin{eqnarray*}
&\phi_k=\Vert \dfrac{\partial (p_k^{ap}(\rho)-p_k(\rho))}{\partial \rho}\Vert_{w^{0,0}}^2+\vert R_k^{ap}-R_k\vert^2, &\\
&\lim_{N\rightarrow \infty }K(N)=0.&
\end{eqnarray*}
So, there is a positive constant $M_2^*$ such that by applying the above recurrence relation we get
\begin{equation*}
\max_{k=0,\cdots ,n+1}\{\phi_k\}\leq (1+M_2^*h^*)^{n+1}\phi_0 +\Big|\dfrac{(1+M_2^*h^*)^{n+1}-1}{M_2^*h^*}\Big|(h^*\Vert E_p^{n,*}\Vert_{w^{0,0}}^2+h^*K(N)+h^*\epsilon^*).
\end{equation*}
Finally, it is concluded that there exist positive constants $M_3^*$ and $M_4^*$ such that
\begin{equation}
\max_{k=0,\cdots ,n+1} \{\xi_k\}\leq M_3^* (e^{M_4^* T})({h}^2+(K(N))^{\dfrac{1}{2}}+\epsilon^*),
\end{equation}
where
\begin{eqnarray*}
&\xi_k=\Vert \dfrac{\partial (p_k^{ap}(\rho)-p_k(\rho))}{\partial \rho}\Vert_{w^{0,0}}+\vert R_k^{ap}-R_k\vert,&\\
&\lim_{N\rightarrow \infty }K(N)=0.&
\end{eqnarray*}
\end{proof}
\section{Numerical experiment}
\textbf{Example 1.} Consider the following problem
\begin{eqnarray}\nonumber
&\dfrac{\partial p}{\partial t}(\rho,t)-\dfrac{(\rho+1)v(1,t)}{R(t)}\dfrac{\partial p}{\partial \rho}(\rho,t)-\dfrac{4D_p}{R(t)^2(\rho +1)^2}\dfrac{\partial}{\partial \rho}((\rho +1)^2\dfrac{\partial p(\rho,t)}{\partial \rho})=&\\\nonumber
&1-p(\rho,t)-\delta_p (a(t))(1-p(\rho,t))-(1-I)\beta(a(t))p(\rho ,t)+f_p(\rho,t),&\\
&p(\rho,0)=p_0(\rho),~~\dfrac{\partial p}{\partial \rho}(-1,t)=\dfrac{\partial p}{\partial \rho}(1,t)=0,&\label{ex11}
\end{eqnarray}
\begin{eqnarray}\nonumber
&\dfrac{\partial }{\partial \rho}((\rho+1)^2v(\rho ,t)p(\rho,t))=&\\\nonumber
&\dfrac{R(t)(\rho +1)^2}{2}\left( \alpha_p (a(t))p(\rho,t)+1-p(\rho,t)-\delta_p(a(t)) p(\rho,t)-\delta_q(a(t))(1-p(\rho,t))\right)+f_v(\rho,t),&\\
&v(-1,t)=0,&
\end{eqnarray}
\begin{eqnarray}\nonumber
&\dfrac{\mathit{d}R(t)}{\mathit{d}t}=v(1,t),&\\
&R(0)=1,&\label{ex12}
\end{eqnarray}
and the exact solutions of the above model are as follows
\begin{equation*}
p(\rho,t)=(\exp(t) +1)(\frac{x^3}{3}-x),~~R(t)=1/(t+1),~~ v(\rho,t)=-\dfrac{(\exp(x+1)+1)}{((\exp(2)+1)(t+1)^2)}.  
\end{equation*}
In this section, the main goal is to investigate the numerical solution applied on the model of prostate tumor. We solve the model of the tumor by applying the finite difference/collocation method. In so doing, we should construct trial functions which satisfy the boundary conditions.  For this purpose, a linear combination of  Legendre polynomials is used to approximate the function $p(\rho,t)$ in the form of (\ref{basisbasis}) as follows
\begin{equation*}
p_{n+1}^N(\rho)=\sum_{i=0}^N l_i^{n+1,N}b_i(\rho),
\end{equation*}
where 
\begin{equation}
b_i(\rho)=J^{0,0}_i(\rho)-\dfrac{i(i+1)}{(i+2)(i+3)}J^{0,0}_{i+2}(\rho), ~~ i=1,2,\cdots ,N.
\end{equation}
where $J^{\alpha,\beta}_n$ is the Jacobi orthogonal polynomial of degree $n$ with respect to the weight $(1-x)^{\alpha}(1 + x)^{\beta}$ on the interval $[-1, 1] $. Also, the Gauss-quadrature points $\{x_i^{0,0}\}_{i=1}^N$ are considered as collocation points. The typical parameter values for our numerical simulation are: 
$w_1=0.35,~ w_2=0.1,~\delta_1=0.8245,~\delta_2=1.035,~\theta_1=0.2,~K=1,~a_s=0,~b=1,~\beta_1=0.1,~D_p=1,~I=1,~b=1,$ which are taken from \cite{tao2010mathematical}. 
\begin{definition}
A sequence $\{x_n\}_{n=1}^{\infty}$ is said to converge to $x$ with order $s$ if there exists a positive constant $C$ such that $\vert x_n-x\vert \leq Cn^{-s} ,~~\forall n$.
This can be written as $\vert x_n-x\vert=\mathcal{O}(n^{-s})$.  A practical method to calculate the rate of convergence for a discretization method is to use the following formula
\begin{equation}\label{ratio_remark}
s\approx \dfrac{\log_e(e_{n_2}/e_{n_1})}{\log_e(n_1/n_2)},
\end{equation}
where $e_{n_1}$ and $e_{n_2}$ denote the errors with respect to the step sizes $\dfrac{1}{n_1}$ and $\dfrac{1}{n_2}$, respectively \cite{gautschi1997numerical}.
\end{definition}
We assess the accuracy of the finite difference method by reporting the following error
\[E_{\infty}(p)=\max\{\vert \widehat{p}(x_i,t_j)-p_{N,M}(x_i,t_j)\vert,\,\, i=1,\cdots N,\,\, j=1,\cdots ,M \},\]
in which the $\widehat{p}(x_i,t_j)$ in the problems with exact solution is the exact solution at $x=x_i$ and $t=t_j$ and in the problems without the exact solution is the solution of the problem using finite difference with an appropriate time steps and $N$  collocation points which is chosen to be as an exact solution for obtaining the error. Also in general $p_{N,M}(x_i,t_j)$ is the solution of the problem using $M$ time steps and $N$ collocation points.

We carried out the numerical computations applying the \textbf{MATLAB 2018a} program using a computer with the Intel Core i7 processor (2.90 GHz, 4 physical cores).\\
In Figure \ref{TEN100}, we have plotted the graph of the following error function for  $N=100$.
\begin{equation*}
e_n^{p,M,N}=p_n^{ap}(\rho)-p_n(\rho).
\end{equation*}
\begin{figure}
\centering
\includegraphics[scale=.4]{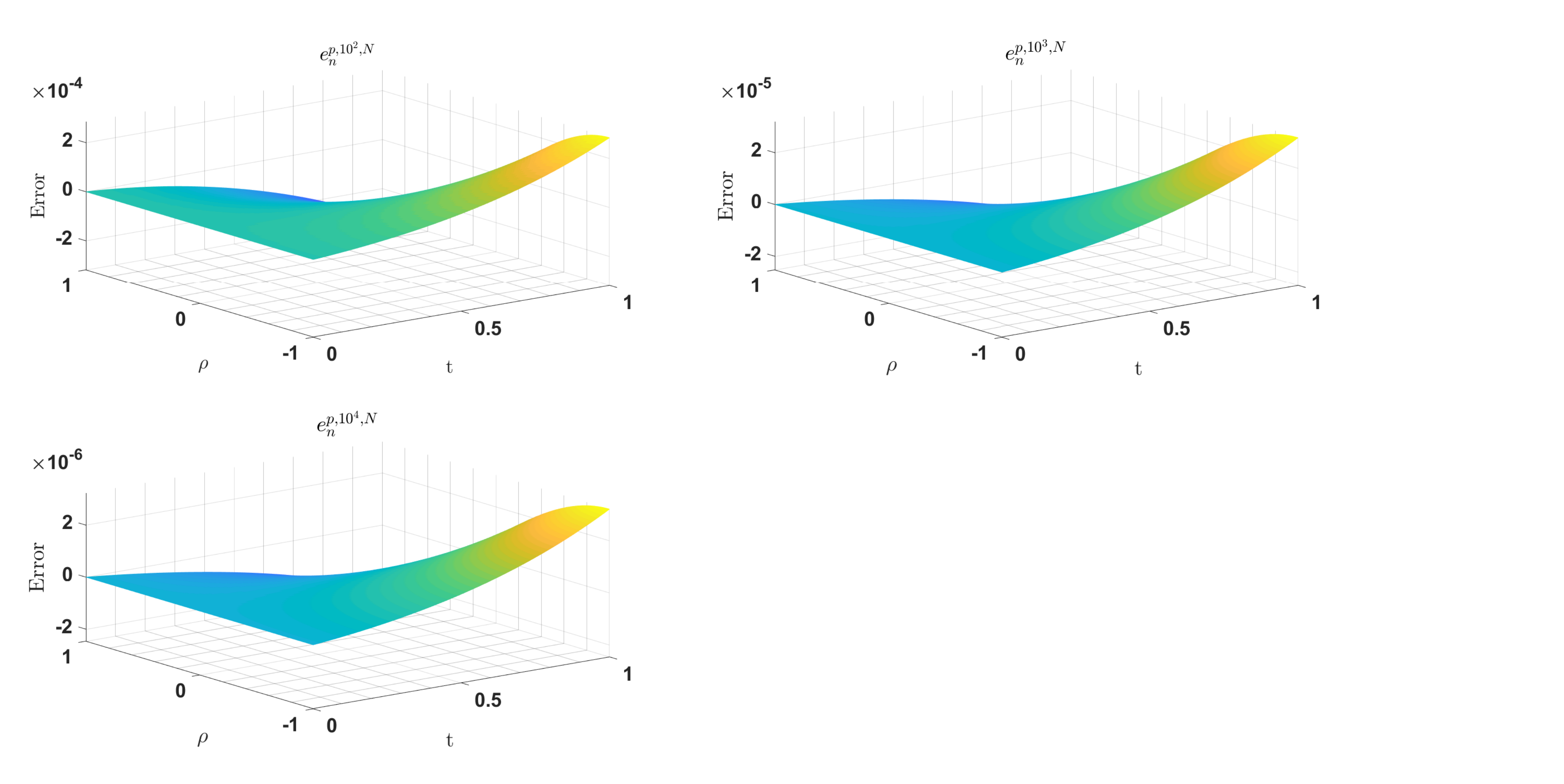}
\caption{\it{Error functions $e_n^{p,M,N}$ for $N=100$ and various values of M and $T=1$.}}
\label{TEN100}
\end{figure}
We have presented the maximum time-error and the computed order of convergence ($s$) using (\ref{ratio_remark})  by considering constant $N=100$ in Table \ref{time_error}. To better see the time-error of numerical results, Figure \ref{time_exact_semy} is presented. It can be observed from this figure that the errors decrease rapidly.  Also, in Figure \ref{time_exact_ratio} the computed order of convergence for numerical finite difference method is shown. It is shown that the finite difference method has almost $\mathcal{O}(h^2)$ error, as someone would expect from convergence Theorem \ref{convergencetheorem}. 
The fact is, in each time step, we have an error of order $\mathcal{O}(h^2)$ caused because of finite difference method and by large values of $M$ and $N$  the error of collocation method may tend to be far more than this. Due to this issue, it may be not suitable to indicate the error of collocation method in the presence of the one of finite difference. In turn, we consider the values of $M$ to be constant and assume the solution of the problem with $N = 600$ as an exact solution (because of the stability and convergence of the presented method) and present the maximum spectral-error with constants $M=1000$ and $M=100$ and various
values of $N$ in Table \ref{space_error_3}. To better see the spectral-error of numerical results, Figure \ref{space_error_fig} is presented. It is obviously observable from this figure that the errors decrease rapidly by increasing the collocation points and the spectral convergence rate is gained by the method.

Also, in order to show the stability of the method for the model (\ref{ex11})-(\ref{ex12}), $\epsilon_1(t,x)$ and $\epsilon_2(t,x)$ are added to the source term of the equations and the results obtain from the model (\ref{ex11})-(\ref{ex12}) and its perturbed are compared through the Table \ref{stabilityT} and Figure \ref{stabilityF} by considering different values of $\epsilon^*$ which is defined in Theorem \ref{stabilitytheorem}. As it is observable, the subtraction shows that a small perturbations in the right-hand side of the equation leads to arbitrarily small changes in the solution of the problem.

It is notable to say that in Figure \ref{stabilityF}, we have plotted the graph of the following error function for some values of $M$, $N$ and $\epsilon^*$
\begin{equation*}
e_n^{M,N,\epsilon^*}(p)=p_n^{ap}(p)-p_n^{ap,\epsilon^*}(p),
\end{equation*}
 where $p_n^{ap,\epsilon^*}$ is the solution of the perturbed model (\ref{ex11})-(\ref{ex12}) at the $n^{th}$ step of the finite difference method and in Table \ref{stabilityT}, the following error is presented
 \begin{equation*}
E_{\infty}^s=\max_{i=1,\cdots , N}\{\max_{j=1,\cdots M}\{\vert p_{N,M}(x_i,t_j)-p^s_{N,M}(x_i,t_j)\vert\}\},
 \end{equation*}
 where $p_{N,M}^s(x,t)$ is the solution of the perturbed model of (\ref{ex11})-(\ref{ex12}) using $N$ collocation points and $M$ time steps .
\begin{table}
\begin{center}
{\scriptsize{\begin{tabular}{lllllllllllll}
\hline
  &  & \multicolumn{1}{c}{$M=100$} & \multicolumn{1}{c}{} & \multicolumn{1}{c}{$M=200$} & \multicolumn{1}{c}{} & \multicolumn{1}{c}{$M=1000$} & \multicolumn{1}{c}{} & \multicolumn{1}{c}{$M=2000$} & \multicolumn{1}{c}{} & \multicolumn{1}{c}{$M=5000$} & \multicolumn{1}{c}{} & \multicolumn{1}{c}{$M=10000$} \\ 
\cline{1-1}\cline{3-3}\cline{5-5}\cline{7-7}\cline{9-9}\cline{11-11}\cline{13-13}
\multicolumn{1}{c}{Error of $p$} &  & \multicolumn{1}{c}{$1.0169e-03$} & \multicolumn{1}{c}{} & \multicolumn{1}{c}{$2.5075e-04$} & \multicolumn{1}{c}{} & \multicolumn{1}{c}{$9.9203e-06$} & \multicolumn{1}{c}{} & \multicolumn{1}{c}{$2.4766e-06$} & \multicolumn{1}{c}{} & \multicolumn{1}{c}{$3.9594e-07$} & \multicolumn{1}{c}{} & \multicolumn{1}{c}{$9.8958e-08$} \\ 
\multicolumn{1}{c}{\text{Rate of convergence ($s$)}} &  & \multicolumn{1}{c}{-} & \multicolumn{1}{c}{} & \multicolumn{1}{c}{$2.019912$} & \multicolumn{1}{c}{} & \multicolumn{1}{c}{$2.006831$} & \multicolumn{1}{c}{} & \multicolumn{1}{c}{$2.001978$} & \multicolumn{1}{c}{} & \multicolumn{1}{c}{$2.000896$} & \multicolumn{1}{c}{} & \multicolumn{1}{c}{$2.000407$} \\ 
\hline
\end{tabular}}}
\end{center}
\caption{\it{Maximum time-error with $N=100$, various $M$ and $T=1$ and also the rate of convergence with respect to the time variable .}}
\label{time_error}
\end{table}
\begin{figure}
\centering
\includegraphics[scale=.38]{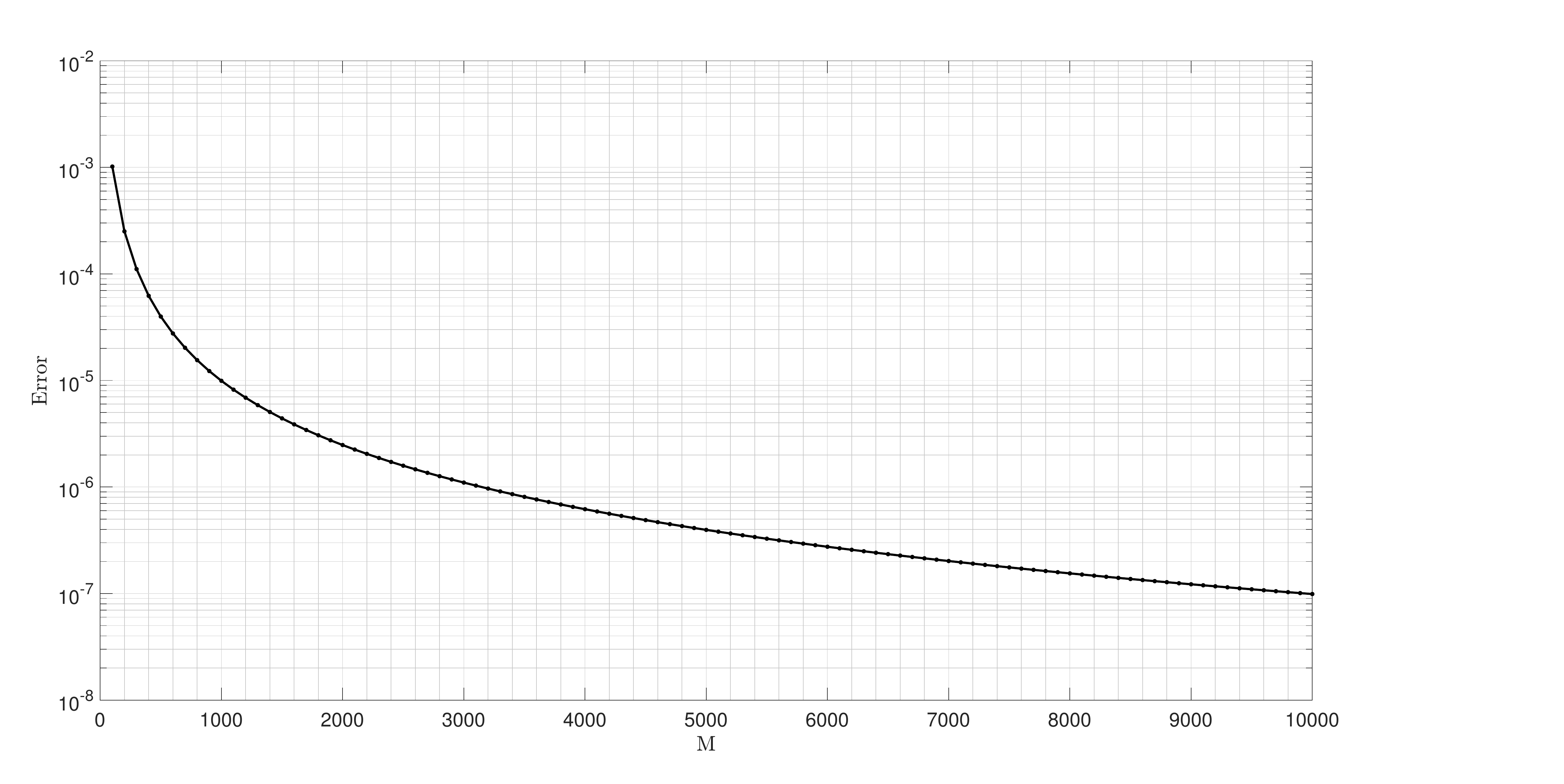}
\caption{\it{Maximum time-error with $N=100$, various values of M and $T=1$.}}
\label{time_exact_semy}
\end{figure}
\begin{figure}
\centering
\subfigure{\includegraphics[scale=.3]{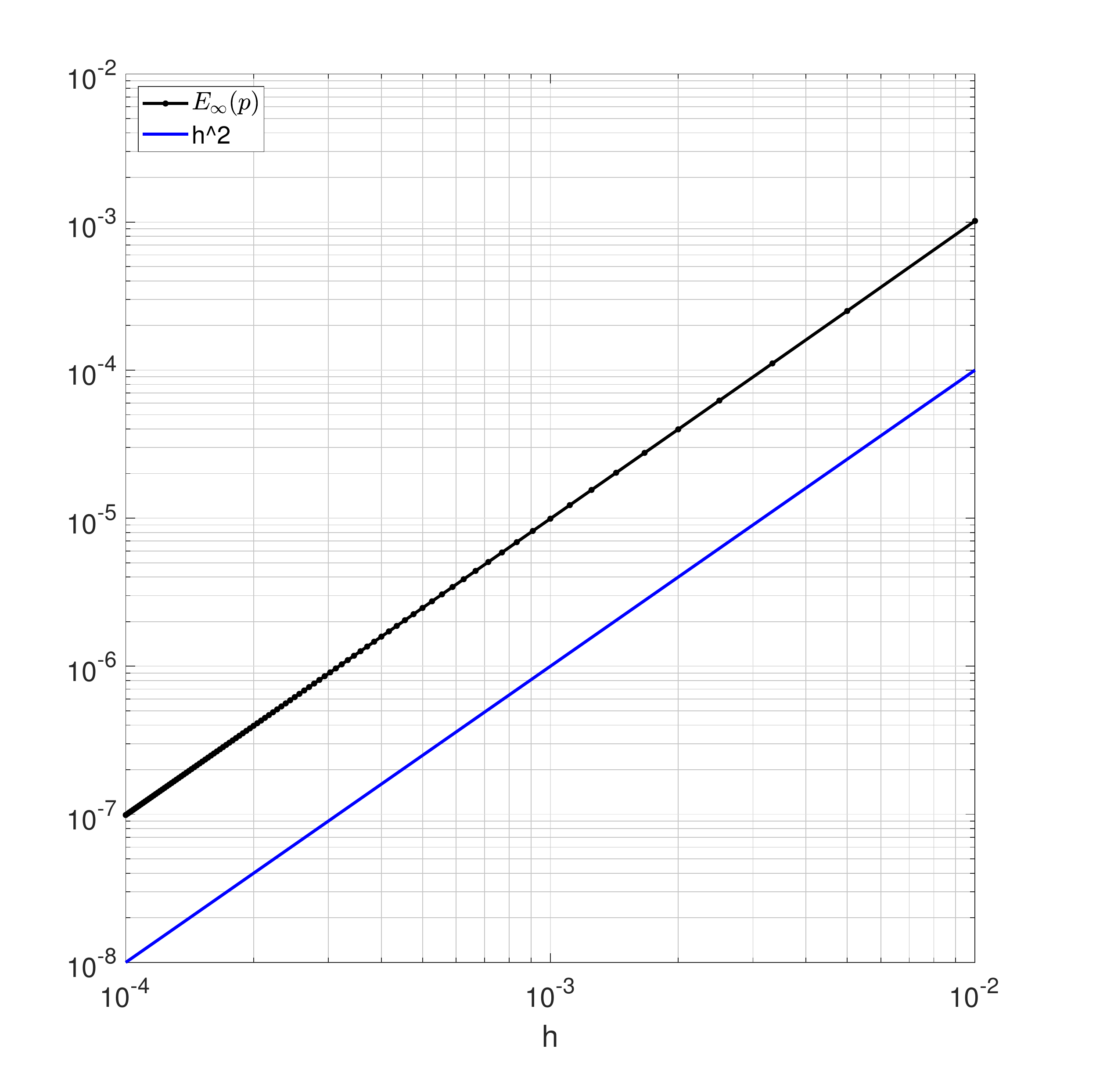}}
\subfigure{\includegraphics[scale=.36]{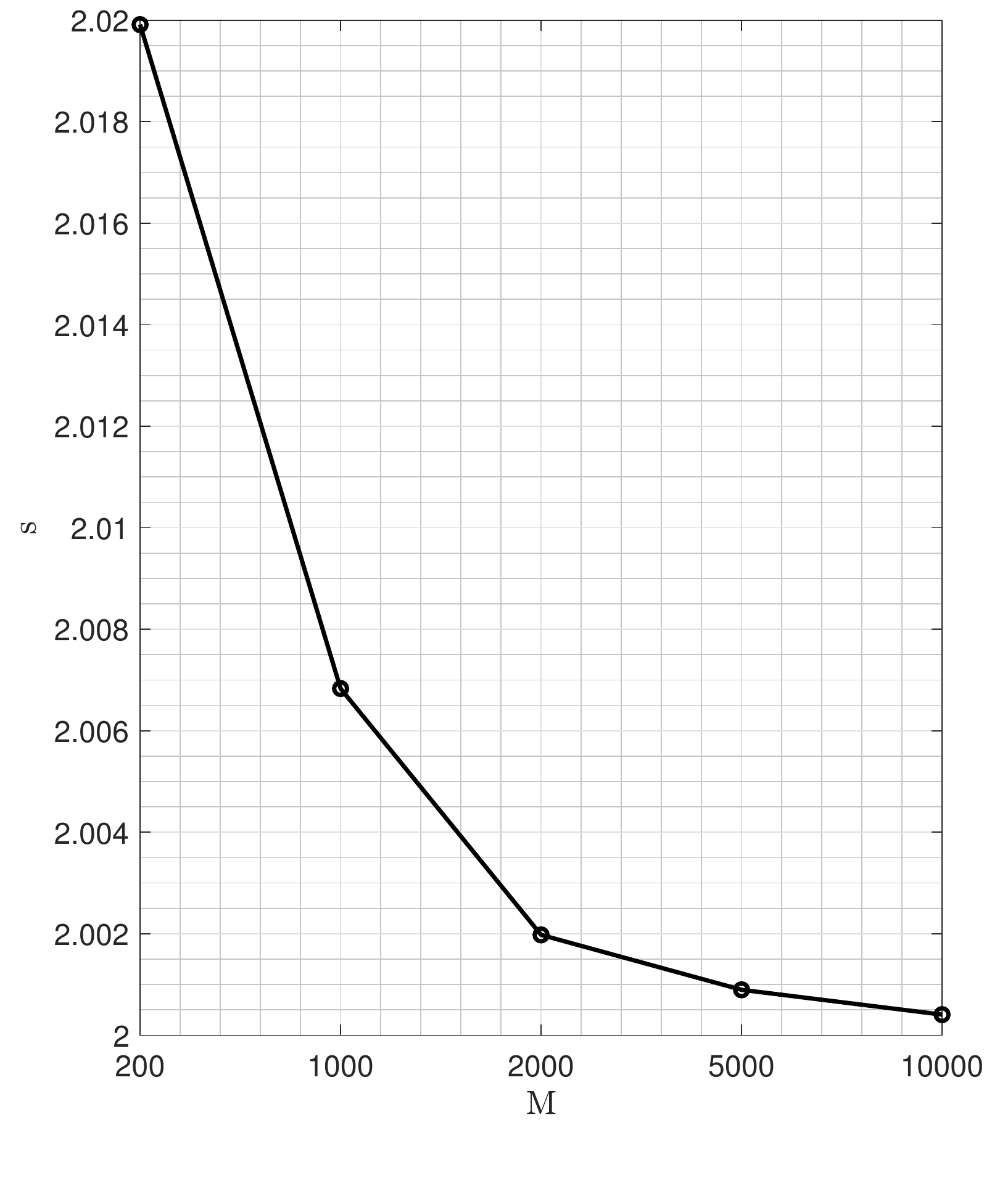}}
\caption{\it{The behaviour of time-error in LogLog scale (The left figure) and the rate of convergence (The right figure).}}
\label{time_exact_ratio}
\end{figure}
\begin{table}
\begin{center}
{\scriptsize{\begin{tabular}{lllllllllll}
\hline
\multicolumn{1}{c}{Error of $p$ with} & \multicolumn{1}{c}{} & \multicolumn{1}{c}{$N=10$} & \multicolumn{1}{c}{} & \multicolumn{1}{c}{$N=20$} & \multicolumn{1}{c}{} & \multicolumn{1}{c}{$N=100$} & \multicolumn{1}{c}{} & \multicolumn{1}{c}{$N=200$} & \multicolumn{1}{c}{} & \multicolumn{1}{c}{$N=300$} \\ 
\cline{1-1}\cline{3-3}\cline{5-5}\cline{7-7}\cline{9-9}\cline{11-11}
\multicolumn{1}{c}{$M=100$} & \multicolumn{1}{c}{} & \multicolumn{1}{c}{$1.368648e-08$} & \multicolumn{1}{c}{} & \multicolumn{1}{c}{$4.814340e-10$} & \multicolumn{1}{c}{} & \multicolumn{1}{c}{$8.211209e-13$} & \multicolumn{1}{c}{} & \multicolumn{1}{c}{$1.603162e-13$} & \multicolumn{1}{c}{} & \multicolumn{1}{c}{$1.807443e-13$} \\ 
\multicolumn{1}{c}{$M=1000$} & \multicolumn{1}{c}{} & \multicolumn{1}{c}{$1.442142e-09$} & \multicolumn{1}{c}{} & \multicolumn{1}{c}{$5.125921e-11$} & \multicolumn{1}{c}{} & \multicolumn{1}{c}{$5.369038e-13$} & \multicolumn{1}{c}{} & \multicolumn{1}{c}{$1.407762e-13$} & \multicolumn{1}{c}{} & \multicolumn{1}{c}{$3.383959e-13$} \\ 
\hline
\end{tabular}}}
\end{center}
\caption{\it{Maximum spectral-error with constants $M$, various values of $N$ and $T=1$.}}
\label{space_error_3}
\end{table}
\begin{figure}
\centering
\includegraphics[scale=.48]{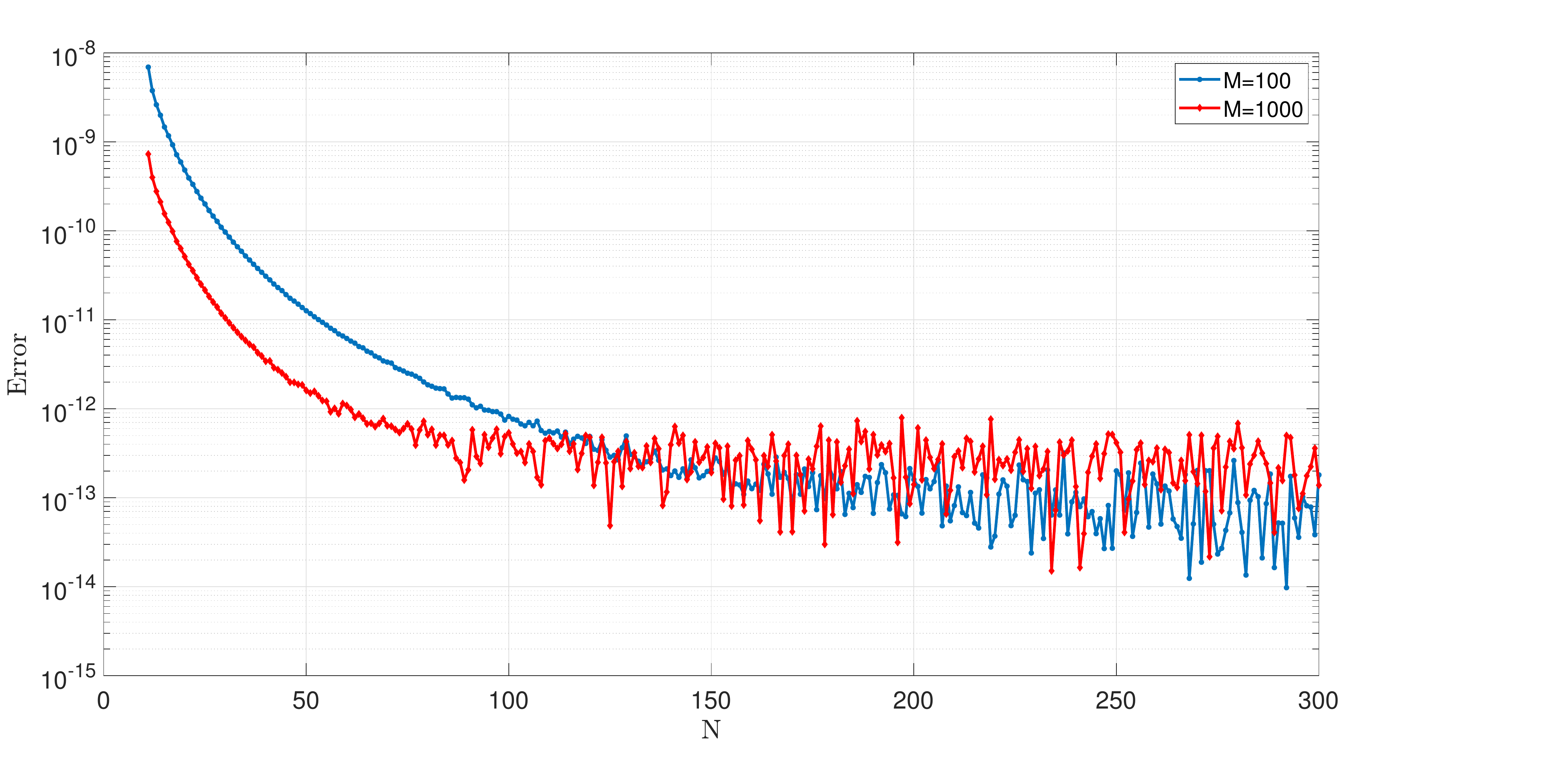}
\caption{\it{{Maximum spectral-error with constants $M$, various values of $N$ and $T=1$.}}}
\label{space_error_fig}
\end{figure}
\begin{table}
\begin{center}
{\scriptsize{\begin{tabular}{lllllllllll}
\hline
 &  & \multicolumn{1}{c}{$\epsilon^*=10^{-6}$} & \multicolumn{1}{c}{} & \multicolumn{1}{c}{$\epsilon^*=10^{-8}$} & \multicolumn{1}{c}{} & \multicolumn{1}{c}{$\epsilon^*=10^{-10}$} & \multicolumn{1}{c}{} & \multicolumn{1}{c}{$\epsilon^*=10^{-12}$} & \multicolumn{1}{c}{} & \multicolumn{1}{c}{$\epsilon^*=10^{-14}$} \\ 
\cline{1-1}\cline{3-3}\cline{5-5}\cline{7-7}\cline{9-9}\cline{11-11}
$M=1000 , N=10$ &  & \multicolumn{1}{c}{$6.1294e-07$} & \multicolumn{1}{c}{} & \multicolumn{1}{c}{$6.1294e-09$} & \multicolumn{1}{c}{} & \multicolumn{1}{c}{$6.1292e-11$} & \multicolumn{1}{c}{} & \multicolumn{1}{c}{ $5.8353e-13$} & \multicolumn{1}{c}{} & \multicolumn{1}{c}{$3.5527e-14$} \\ 
$M=2000 , N=20$ &  & \multicolumn{1}{c}{$6.1727e-07$} & \multicolumn{1}{c}{} & \multicolumn{1}{c}{$6.1727e-09$} & \multicolumn{1}{c}{} & \multicolumn{1}{c}{$6.1723e-11$} & \multicolumn{1}{c}{} & \multicolumn{1}{c}{$6.3505e-13$} & \multicolumn{1}{c}{} & \multicolumn{1}{c}{$4.5519e-14$} \\ 
$M=3000 , N=30$ &  & \multicolumn{1}{c}{$6.1749e-07$} & \multicolumn{1}{c}{} & \multicolumn{1}{c}{$6.1747e-09$} & \multicolumn{1}{c}{} & \multicolumn{1}{c}{$6.1625e-11$} & \multicolumn{1}{c}{} & \multicolumn{1}{c}{$5.2713e-13$} & \multicolumn{1}{c}{} & \multicolumn{1}{c}{$5.6177e-14$} \\ 
\hline
\end{tabular}}}
\end{center}
\caption{\it{The subtraction of the solution of (\ref{ex11})-(\ref{ex12}) and its perturbed with some values of $\epsilon^*$, $M$, $N$ and $T=1$.}}
\label{stabilityT}
\end{table}
\begin{figure}
\centering
\includegraphics[scale=.43]{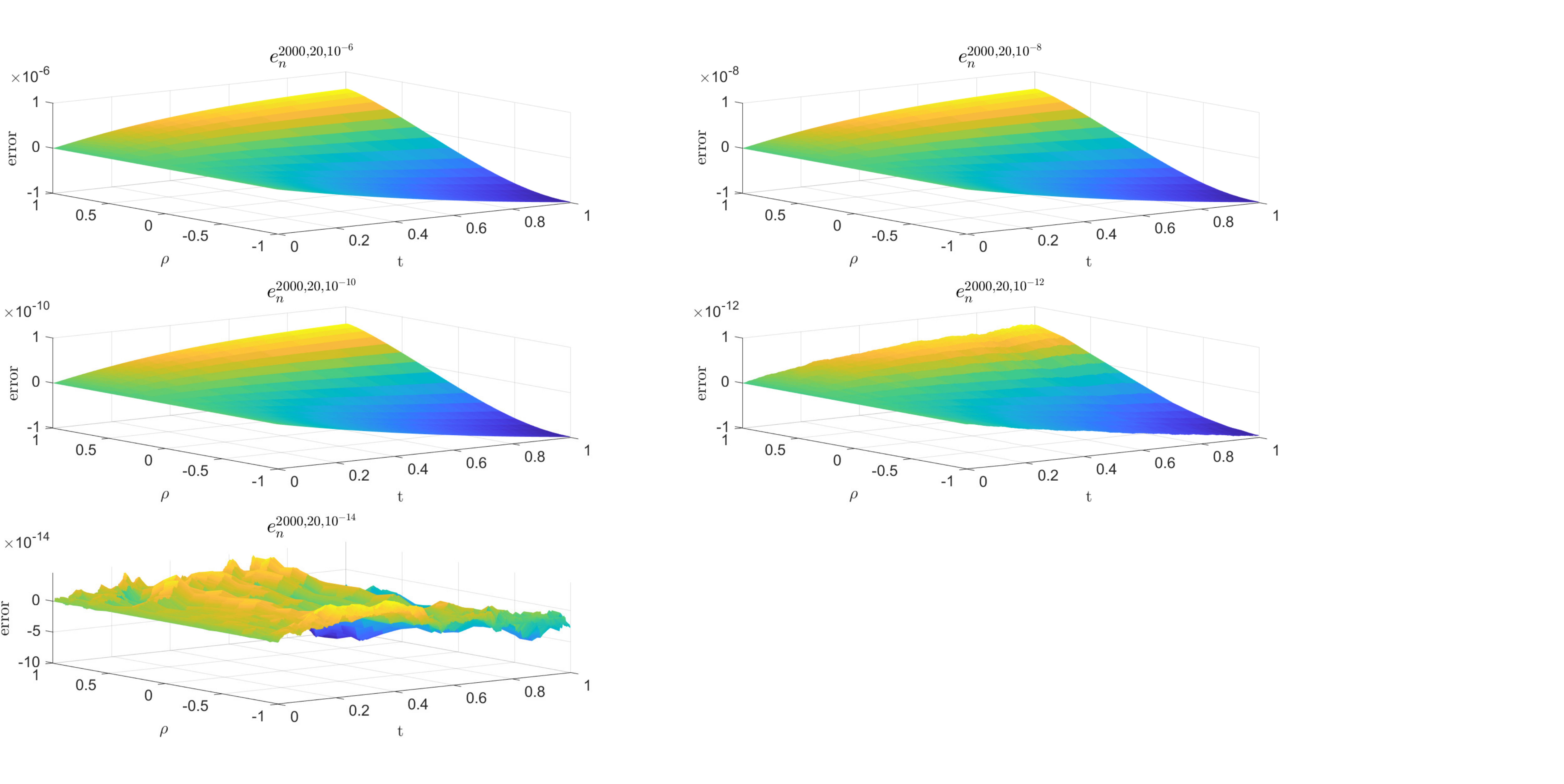}
\caption{\it{{The subtraction of the solution of (\ref{ex11})-(\ref{ex12}) and its perturbed with some values of $\epsilon^*$, $M=2000$, $N=20$ and $T=1$.}}}
\label{stabilityF}
\end{figure}
\newpage
\textbf{Example 2.} Consider the problem (\ref{ex11})-(\ref{ex12}) in Example 1 with the following exact solutions
\begin{equation*}
p(\rho,t)=\exp(t)(x^4-2x^2),~~R(t)=\dfrac{1}{(t+1)},~~ v(\rho,t)=-\dfrac{\sin(\pi x/2 )+1}{2(t+1)^2}.
\end{equation*} 
In Figure \ref{example2_n100M}, we have plotted the graph of the following error function for  various values of $M$.
\begin{equation*}
e_n^{p,M,N}=p_n^{ap}(\rho)-p_n(\rho).
\end{equation*}
\begin{figure}
\centering
\includegraphics[scale=.4]{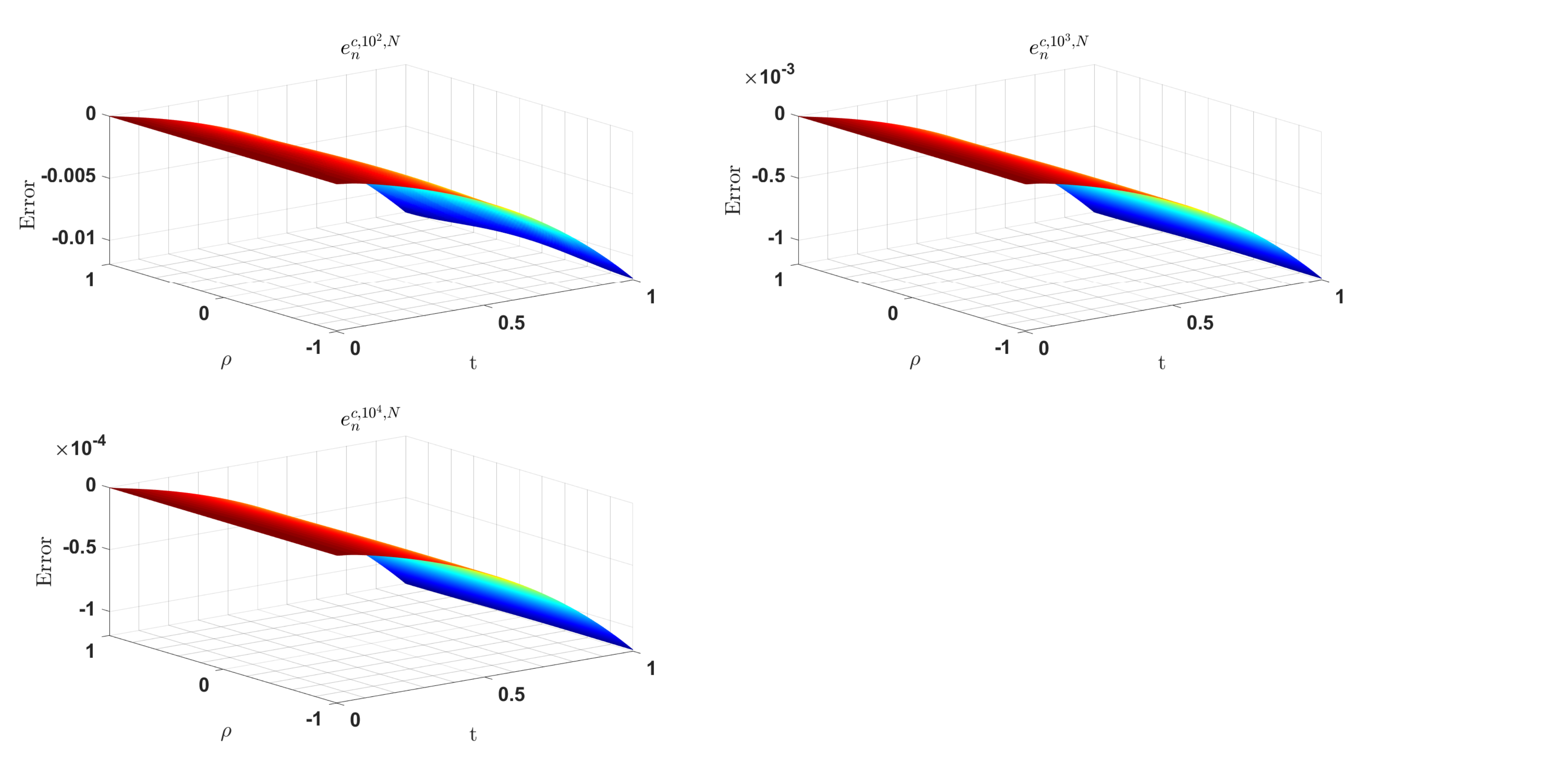}
\caption{\it{Error functions $e_n^{p,M,N}$ for $N=100$, various values of $M$ and $T=1$.}}
\label{example2_n100M}
\end{figure}
The maximum time-error and the computed order of convergence (s) using (\ref{ratio_remark})  by considering constant $N=100$ is presented in Table \ref{time_error_ex2} and Figures \ref{time_exact_semy_ex2} and \ref{time_exact_ratio_ex2}. It can be  observed from Table \ref{time_error_ex2} that the order of convergence of finite difference method is $\mathcal{O}(h^2)$.
We consider the values of $M$ to be constant and assume the solution of the problem with $N = 600$ as an exact solution and present the maximum spectral-error by constants $M=1000$ and $M=100$ and various
values of $N$ in Table \ref{space_error} and Figure \ref{spectral_ex2}. As a result, it can satisfy the expectation from the spectral convergenc rate, since it decreases rapidly by increasing the number of collocation points. 
\begin{table}
\begin{center}
{\scriptsize{\begin{tabular}{lllllllllllll}
\hline
 &  & \multicolumn{1}{c}{$M=100$} & \multicolumn{1}{c}{} & \multicolumn{1}{c}{$M=200$} & \multicolumn{1}{c}{} & \multicolumn{1}{c}{$M=1000$} & \multicolumn{1}{c}{} & \multicolumn{1}{c}{$M=2000$} & \multicolumn{1}{c}{} & \multicolumn{1}{c}{$M=5000$} & \multicolumn{1}{c}{} & \multicolumn{1}{c}{$M=10000$} \\ 
\cline{1-1}\cline{3-3}\cline{5-5}\cline{7-7}\cline{9-9}\cline{11-11}\cline{13-13}
\multicolumn{1}{c}{{Error of $p$}} &  & \multicolumn{1}{c}{$4.1583e-04$} & \multicolumn{1}{c}{} & \multicolumn{1}{c}{$1.0243e-04$} & \multicolumn{1}{c}{} & \multicolumn{1}{c}{$4.0496e-06$} & \multicolumn{1}{c}{} & \multicolumn{1}{c}{$1.1202e-06$} & \multicolumn{1}{c}{} & \multicolumn{1}{c}{$1.9094e-07$} & \multicolumn{1}{c}{} & \multicolumn{1}{c}{$4.0389e-08$} \\ 
\multicolumn{1}{c}{\text{Rate of convergence ($s$)}} &  & \multicolumn{1}{c}{-} & \multicolumn{1}{c}{} & \multicolumn{1}{c}{$ 2.0212768$} & \multicolumn{1}{c}{} & \multicolumn{1}{c}{$2.0073041$} & \multicolumn{1}{c}{} & \multicolumn{1}{c}{$2.0021163$} & \multicolumn{1}{c}{} & \multicolumn{1}{c}{$2.0009597$} & \multicolumn{1}{c}{} & \multicolumn{1}{c}{$2.0004299$} \\ 
\hline
\end{tabular}}}
\end{center}
\caption{\it{Maximum time-error with $N=100$, various $M$ and $T=1$ and the rate of convergence with respect to the time variable.}}
\label{time_error_ex2}
\end{table}
\begin{figure}
\centering
\includegraphics[scale=.38]{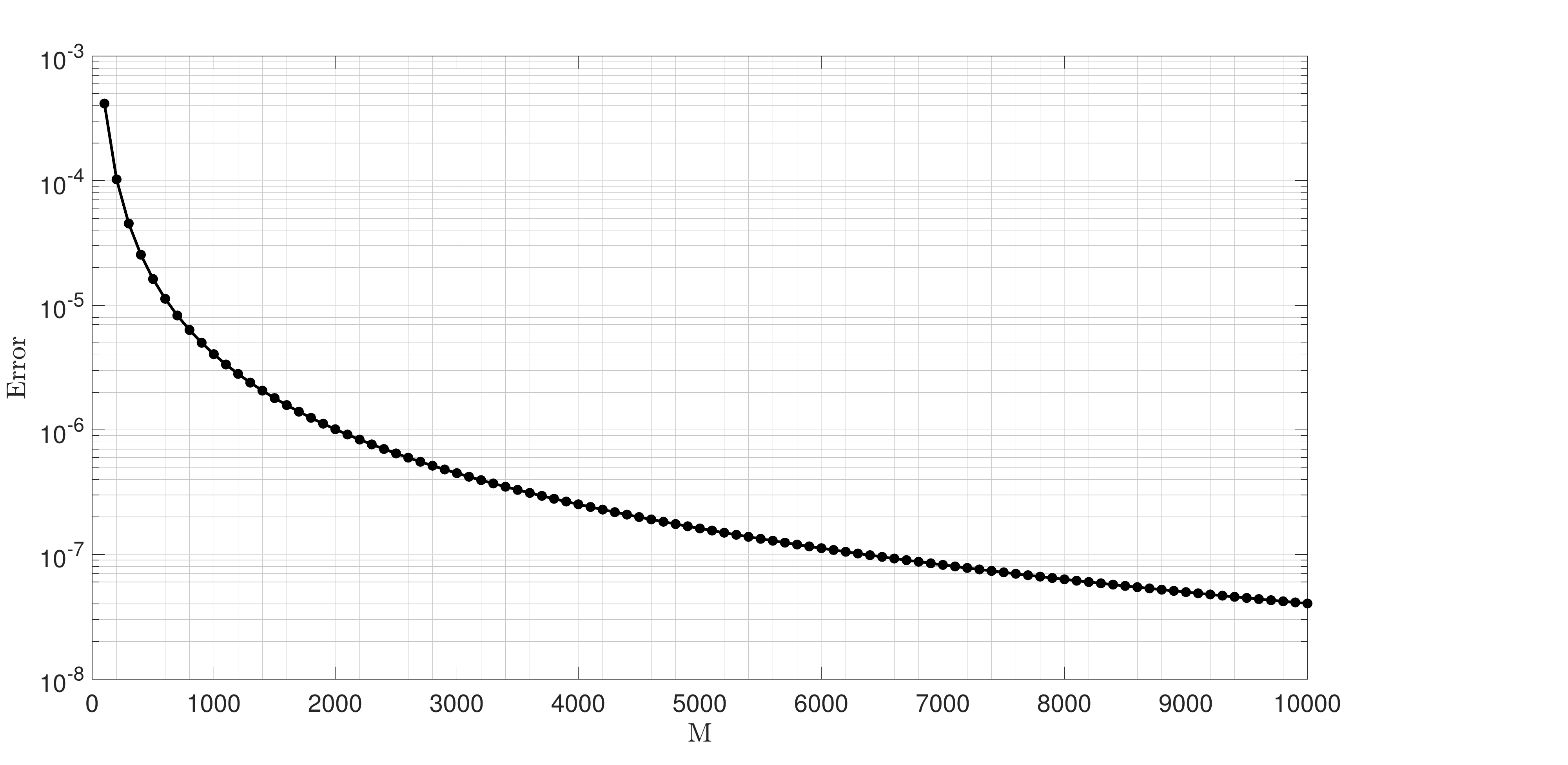}
\caption{\it{Maximum time-error with $N=100$ and various values of $M$.}}
\label{time_exact_semy_ex2}
\end{figure}
\begin{figure}
\centering
\subfigure{\includegraphics[scale=.3]{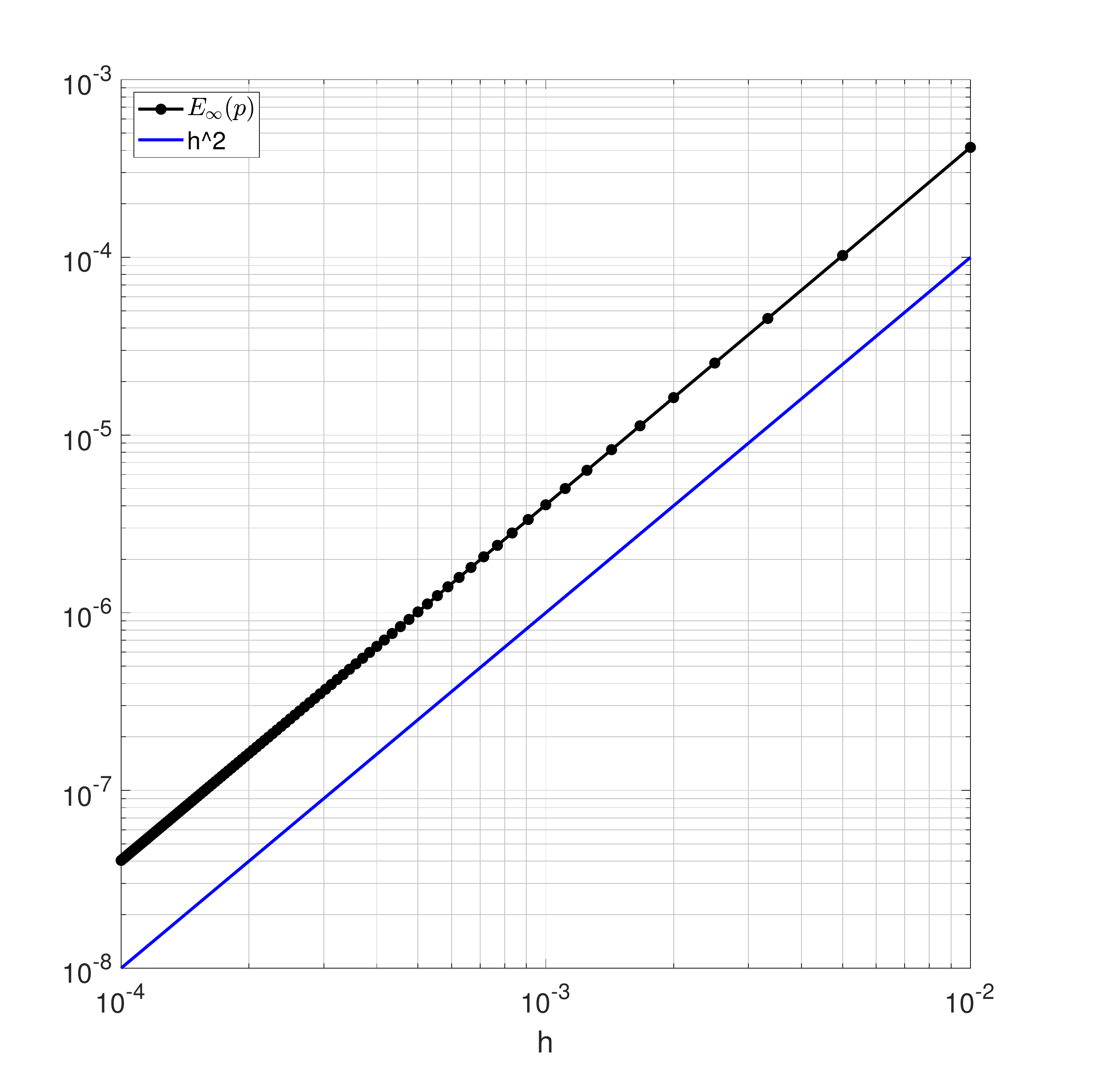}}
\subfigure{\includegraphics[scale=.3]{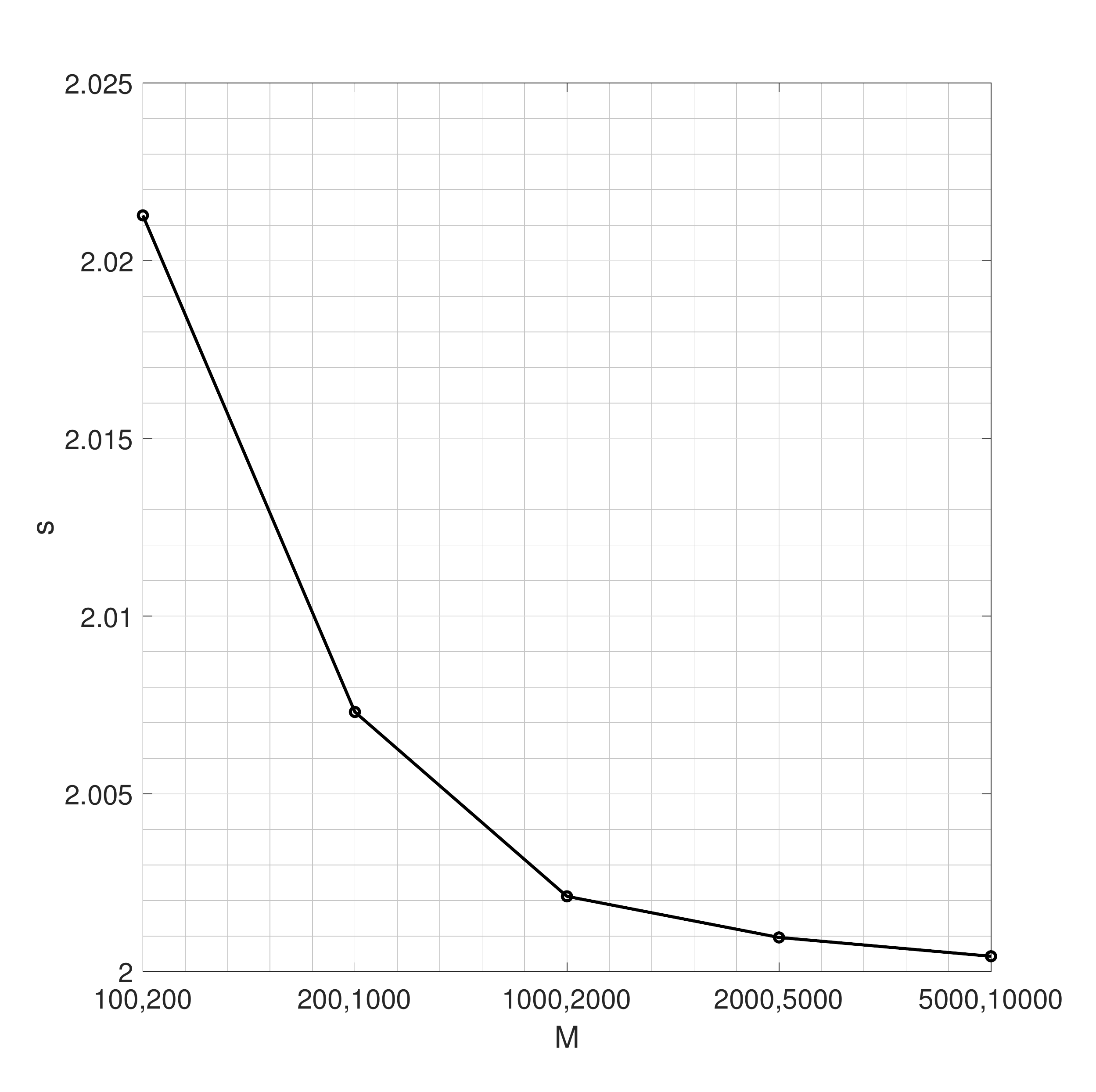}}
\caption{\it{The behaviour of time-error in LogLog scale (The left figure) and the rate of convergence (The right figure).}}
\label{time_exact_ratio_ex2}
\end{figure}
\begin{table}
\begin{center}
{\scriptsize{\begin{tabular}{lllllllllll}
\hline
\multicolumn{1}{c}{Error of p with} & \multicolumn{1}{c}{} & \multicolumn{1}{c}{$N=10$} & \multicolumn{1}{c}{} & \multicolumn{1}{c}{$N=20$} & \multicolumn{1}{c}{} & \multicolumn{1}{c}{$N=100$} & \multicolumn{1}{c}{} & \multicolumn{1}{c}{$N=200$} & \multicolumn{1}{c}{} & \multicolumn{1}{c}{$N=300$} \\ 
\cline{1-1}\cline{3-3}\cline{5-5}\cline{7-7}\cline{9-9}\cline{11-11}
\multicolumn{1}{c}{$M=100$} & \multicolumn{1}{c}{} & \multicolumn{1}{c}{$1.368648e-08$} & \multicolumn{1}{c}{} & \multicolumn{1}{c}{$4.814340e-10$} & \multicolumn{1}{c}{} & \multicolumn{1}{c}{$8.211209e-13$} & \multicolumn{1}{c}{} & \multicolumn{1}{c}{$1.603162e-13$} & \multicolumn{1}{c}{} & \multicolumn{1}{c}{$1.007443e-13$} \\ 
\multicolumn{1}{c}{$M=1000$} & \multicolumn{1}{c}{} & \multicolumn{1}{c}{$1.442142e-09$} & \multicolumn{1}{c}{} & \multicolumn{1}{c}{$5.125921e-11$} & \multicolumn{1}{c}{} & \multicolumn{1}{c}{$5.369038e-13$} & \multicolumn{1}{c}{} & \multicolumn{1}{c}{$1.407762e-13$} & \multicolumn{1}{c}{} & \multicolumn{1}{c}{$1.083959e-13$} \\ 
\hline
\end{tabular}}}
\end{center}
\caption{\it{Maximum spectral-error with constants $M$, various values of $N$ and $T=1$.}}
\label{space_error}
\end{table}
\begin{figure}
\centering
\includegraphics[scale=.48]{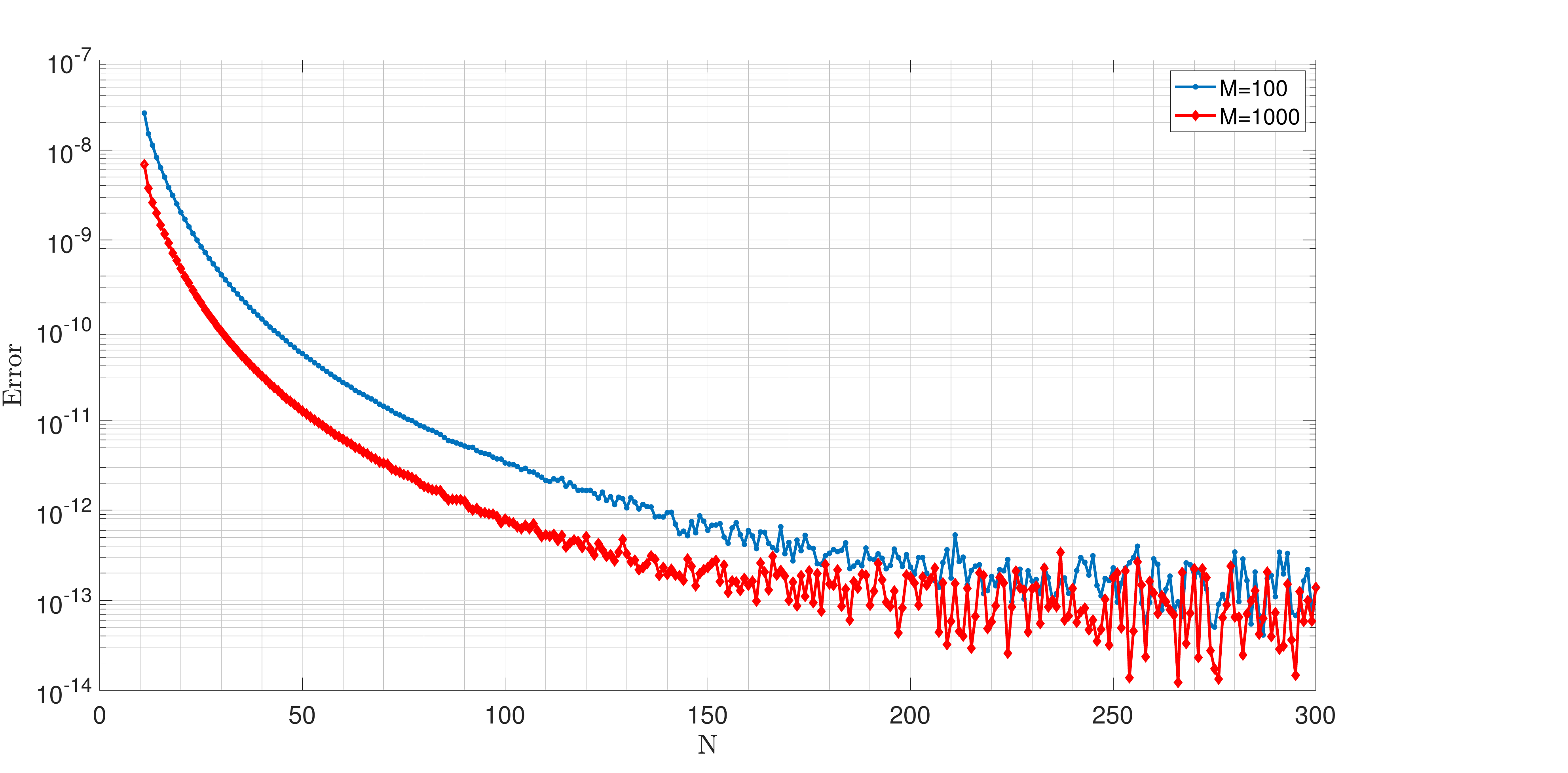}
\caption{\it{{Maximum spectral-error with constants $M$, various values of $N$ and $T=1$.}}}
\label{spectral_ex2}
\end{figure}
\newpage
\textbf{Example 3.} Now based on the efficiency of methods illustrated in Examples 1 and 2, in the following we intend to solve the problem (\ref{transformedp})-(\ref{transformedR}) and present the numerical errors by considering a solution obtained from a fine mesh. First of all, Regarding the fact that we do not have any exact solution of the problem (\ref{transformedp})-(\ref{transformedR}), to ensure about the reliability of the solution we have obtained, the "PDEPE" Matlab package is considered which is suitable for solving parabolic equations along with elliptic ones. In Figure \ref{exact_ex3} the error of the solution of the problem (\ref{transformedp})-(\ref{transformedR}) obtained from the mentioned finite difference-collocation method by considering the solution of the problem given from the "PDEPE" package using a fine mesh as an exact solution is illustrated.
To do so, we have presented the maximum time-error and the computed order of convergence (s) using (\ref{ratio_remark})  by considering constant $N=100$  and assuming the solution of the problem with $M = 20000$ as an exact solution in Table \ref{time_error3}. To better see the time-error of numerical results, Figure \ref{time_error_ex3} is presented. It can be observed that the error decreases by increasing the  number of time steps. Also, in Figure \ref{time_ratio_ex3} the computed order of convergence for numerical finite difference method is shown. It is shown that the finite difference method has almost $\mathcal{O}(h^2)$ error, as someone would expect from convergence Theorem \ref{convergencetheorem}. 
In order to illustrate the error of spectral method, the values of $M$ is considered to be constant and we assume the solution of the problem with $N = 600$ as an exact solution and present the maximum spectral-error by constant  $M=100$ and various
values of $N$ in Table \ref{space_error_ex3_t}. To better see the spectral-error of numerical results, Figure \ref{space_error_ex3} is presented and it shows the convergence rate of the collocation method as it decreases rapidly by increasing the collocation points.
\begin{figure}
\centering
\includegraphics[scale=.45]{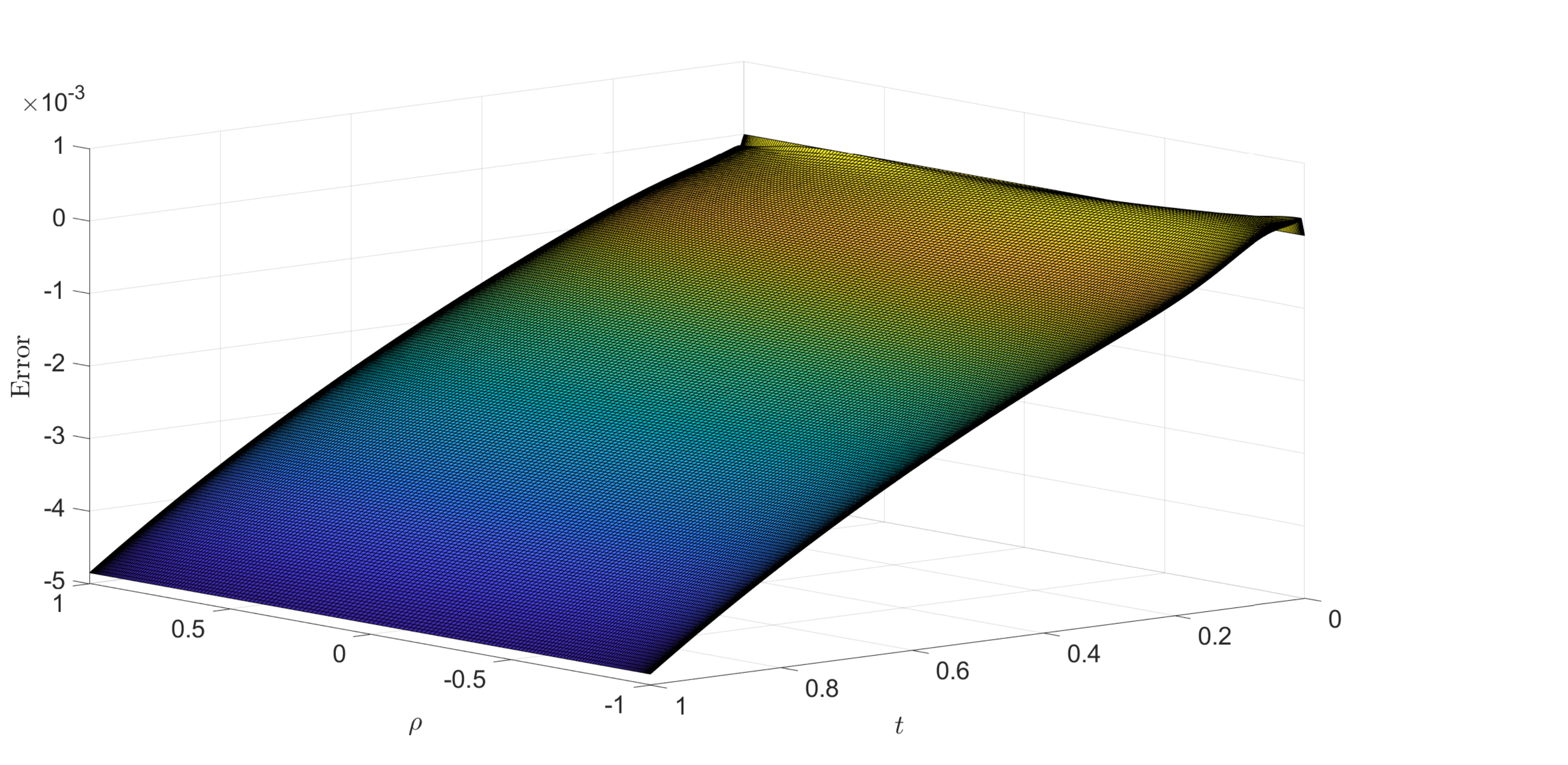}
\caption{\it{{Maximum error of the solution obtained from the finite difference/collocation method by $M=200$, $N=200$ and $T=1$ by considering the solution obtained from the "PDEPE" package by $M=2000$ and $N=2000$ as an exact solution.}}}
\label{exact_ex3}
\end{figure}
\begin{table}
\begin{center}
{\scriptsize{\begin{tabular}{lllllllllllll}
\hline
{} &  & \multicolumn{1}{c}{$M=100$} & \multicolumn{1}{c}{} & \multicolumn{1}{c}{$M=200$} & \multicolumn{1}{c}{} & \multicolumn{1}{c}{$M=1000$} & \multicolumn{1}{c}{} & \multicolumn{1}{c}{$M=2000$} & \multicolumn{1}{c}{} & \multicolumn{1}{c}{$M=5000$} & \multicolumn{1}{c}{} & \multicolumn{1}{c}{$M=10000$} \\ 
\cline{1-1}\cline{3-3}\cline{5-5}\cline{7-7}\cline{9-9}\cline{11-11}\cline{13-13}
\multicolumn{1}{c}{Error of $p$} &  & \multicolumn{1}{c}{$1.3593e-04$} & \multicolumn{1}{c}{} & \multicolumn{1}{c}{$6.7389e-05$} & \multicolumn{1}{c}{} & \multicolumn{1}{c}{$1.2896e-05$} & \multicolumn{1}{c}{} & \multicolumn{1}{c}{$6.1068e-06$} & \multicolumn{1}{c}{} & \multicolumn{1}{c}{$2.0351e-06$} & \multicolumn{1}{c}{} & \multicolumn{1}{c}{$6.7834e-07$} \\ 
\multicolumn{1}{c}{\text{Rate of convergence ($s$)}} &  & \multicolumn{1}{c}{-} & \multicolumn{1}{c}{} & \multicolumn{1}{c}{$1.0123358$} & \multicolumn{1}{c}{} & \multicolumn{1}{c}{$1.0505686$} & \multicolumn{1}{c}{} & \multicolumn{1}{c}{$1.1083728$} & \multicolumn{1}{c}{} & \multicolumn{1}{c}{$1.329036$} & \multicolumn{1}{c}{} & \multicolumn{1}{c}{$1.9705147$} \\ 
\hline
\end{tabular}}}
\end{center}
\caption{\it{Maximum time-error with $N=100$, various values of  of $M$ and $T=1$.}}
\label{time_error3}
\end{table}
\begin{figure}
\centering
\includegraphics[scale=.38]{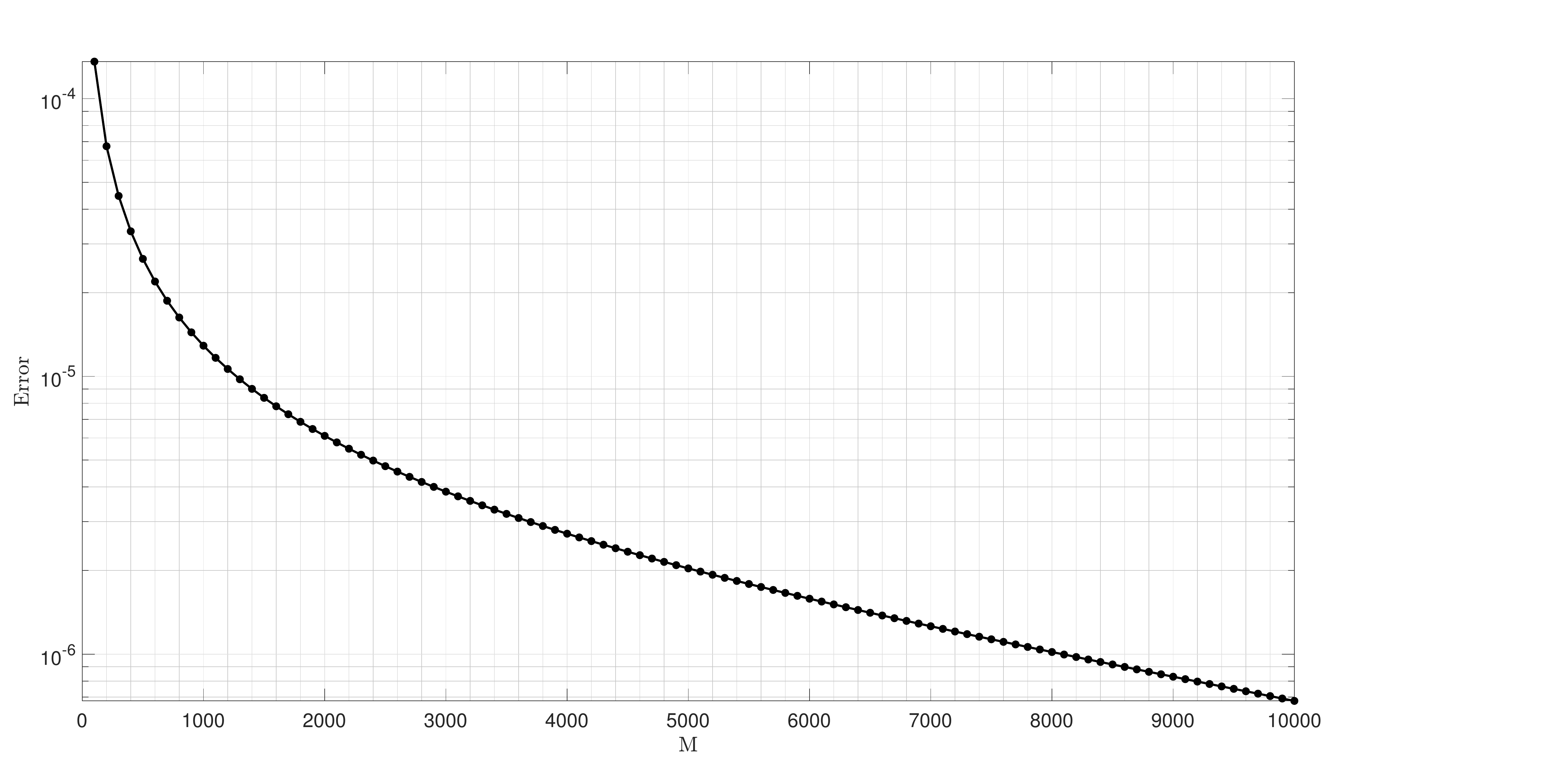}
\caption{\it{{Maximum time-error with constant $N$,  various values of  $M$ and $T=1$.}}}
\label{time_error_ex3}
\end{figure}
\begin{figure}
\centering
\subfigure{\includegraphics[scale=.3]{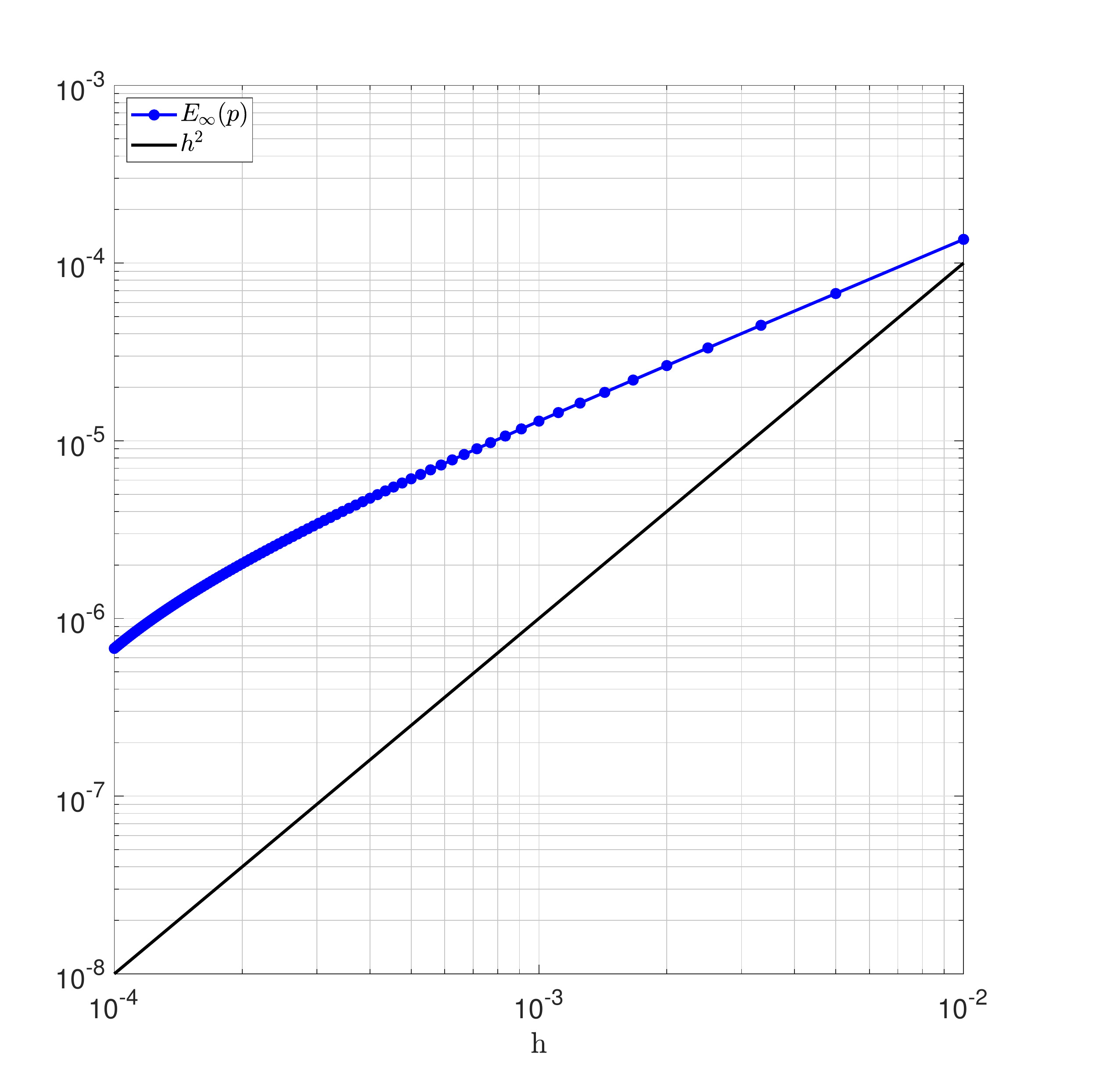}}
\subfigure{\includegraphics[scale=.3]{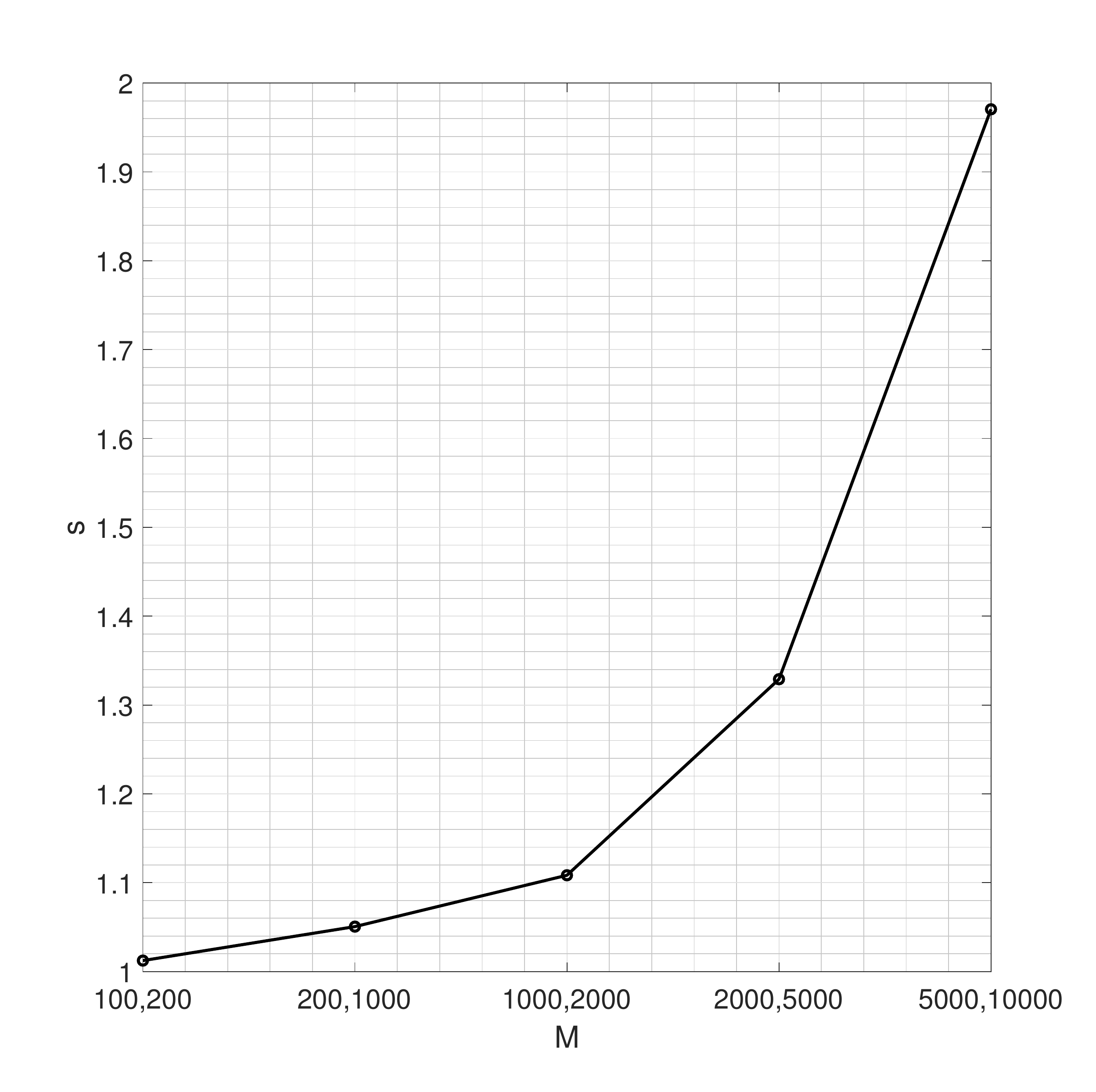}}
\caption{\it{The behaviour of time-error in LogLog scale (The left figure) and the rate of convergence (The right figure).}}
\label{time_ratio_ex3}
\end{figure}
\begin{table}
\begin{center}
{\scriptsize{\begin{tabular}{lllllllllll}
\hline
\multicolumn{1}{c}{Error of p with} & \multicolumn{1}{c}{} & \multicolumn{1}{c}{N=10} & \multicolumn{1}{c}{} & \multicolumn{1}{c}{$N=20$} & \multicolumn{1}{c}{} & \multicolumn{1}{c}{$N=100$} & \multicolumn{1}{c}{} & \multicolumn{1}{c}{$N=200$} & \multicolumn{1}{c}{} & \multicolumn{1}{c}{$N=300$} \\ 
\hline
\multicolumn{1}{c}{$M=100$} & \multicolumn{1}{c}{} & \multicolumn{1}{c}{$7.8479688e-09$} & \multicolumn{1}{c}{} & \multicolumn{1}{c}{$3.727724e10$} & \multicolumn{1}{c}{} & \multicolumn{1}{c}{$6.243964e-13$} & \multicolumn{1}{c}{} & \multicolumn{1}{c}{$3.908324e-14$} & \multicolumn{1}{c}{} & \multicolumn{1}{c}{$7.338763e-15$} \\ 
\hline
\end{tabular}}}
\end{center}
\caption{\it{Maximum spectral-error with constant $M$, various values of $N$ and $T=1$.}}
\label{space_error_ex3_t}
\end{table}
\begin{figure}
\centering
\includegraphics[scale=.38]{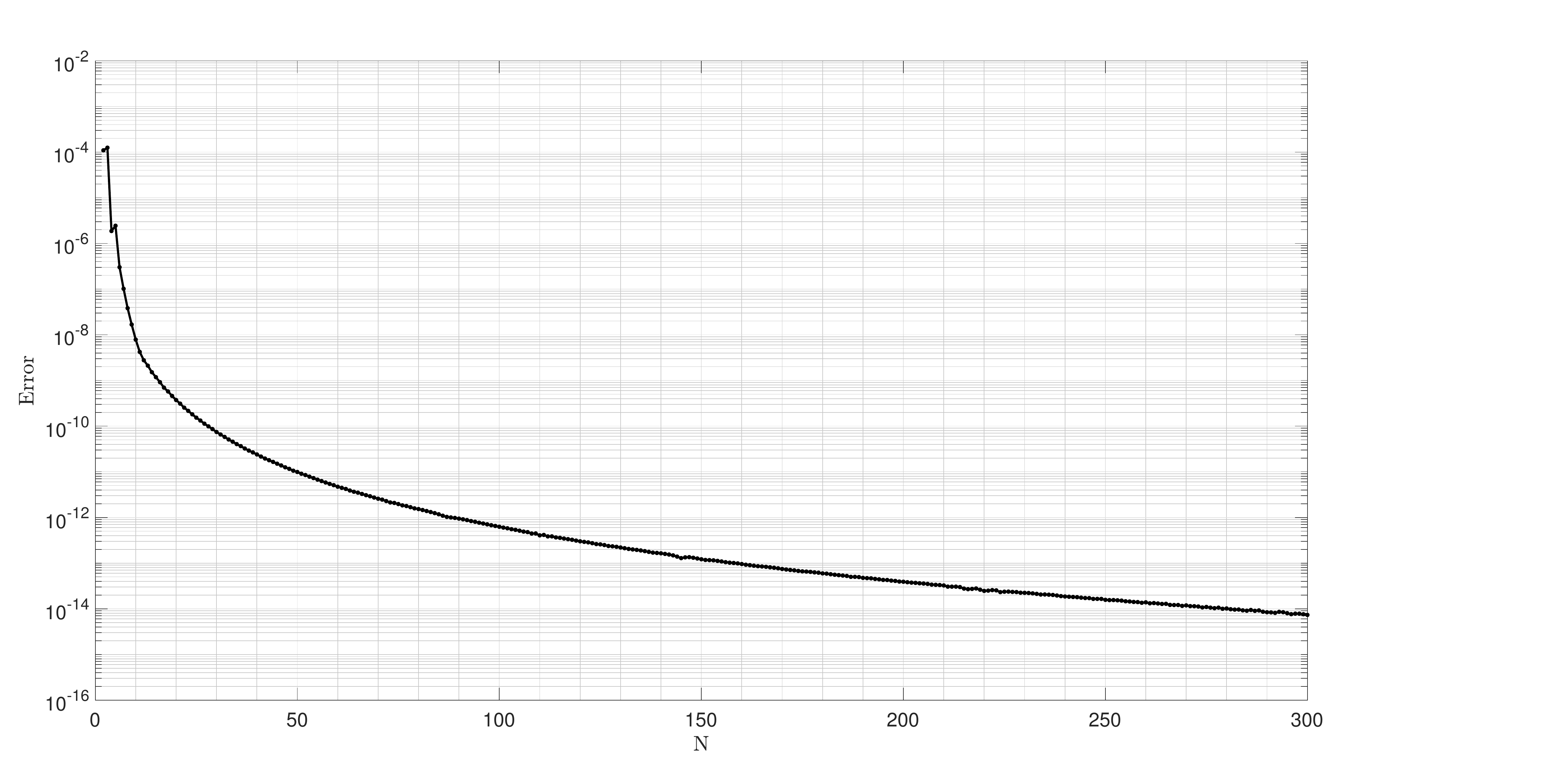}
\caption{\it{ Maximum spectral-error with constant $M$, various values of $N$ and $T=1$.}}
\label{space_error_ex3}
\end{figure}
\newpage
\section{Conclusion}
In this paper, we have considered a nonlinear coupled free boundary problem modelling the growth of prostate tumor including
two reaction-diffusion equations describing the diffusion of androgen-dependent and androgen-independent cells in the tumor. 
 However, ing, especially when it comes to biological problems often produces nonlinear differential equations. Therefore, we are not always be able to obtain the exact solution of these equations, developing numerical techniques to solve these equations is a pressing need. In this study, a mathematical model of prostate tumor is solved numerically and the convergence and stability analysis are presented. For the reader’s convenience, we give the main contributions of this study as follows \\
$\bullet$ As the mentioned mathematical model is a free boundary model and the classical methods are not be efficient in solving these kind of problems, and since it is observable that front fixing method is highly efficient in applying to problems with regular geometries along with the mesh-based methods, so, in this article, we use the front fixing method to convert the free boundary problem (\ref{transformedp})-(\ref{transformedR}) to a fix one.\\
$\bullet$ We have used Taylor theorem, in order to both linearize the equations and construct new second-order non-classical discretization formula to approximate time discretization (Finite difference method).\\
$\bullet$ In this article, we use spectral collocation method in space. To construct trial functions which satisfy the boundary conditions, a linear combination of classical orthogonal polynomials (Legendre polynomials) to construct trial functions is used.\\
$\bullet$ Moreover, in terms of analytical aspects, the convergence and stability of the presented method is proved (See Theorem \ref{convergencetheorem} and Theorem \ref{stabilitytheorem}) and the order of convergence is presented.\\
$\bullet$ In order to indicate the efficiency of methods presented to solve the model numerical-wise, the numerical results are presented in the format of tables and figures. it is shown that the finite difference method displays an $\mathcal{O}(h^2)$ order of convergence, as one would expect from convergence Theorem \ref{convergencetheorem} (See Figure \ref{time_exact_ratio}, \ref{time_exact_ratio_ex2} and \ref{time_ratio_ex3}), and the spectral-error shows that using the collocation method, the results are converging to the exact solution.\\
\section*{References}

\end{document}